\RequirePackage{etex}
\documentclass{amsart}
\setcounter{page}{1}
\usepackage{amsmath}
\usepackage{amsthm}
\usepackage{enumerate}
\usepackage{color}
\usepackage{float}
\usepackage{graphicx}
\usepackage{amssymb}
\usepackage{latexsym}
\usepackage{amsfonts}
\usepackage{cite}
\usepackage{hyperref}
\usepackage{mathtools}
\usepackage[all]{xy}
\usepackage{tikz, tikz-3dplot, tkz-graph}
\usepackage{tikz-cd}
\usetikzlibrary{decorations.markings}
\tikzstyle{vertex}=[circle, draw, inner sep=0pt, minimum size=13pt]

\allowdisplaybreaks

\setcounter{MaxMatrixCols}{12}

\hypersetup{unicode=false, pdftoolbar=true, pdfmenubar=true,
        pdffitwindow=false, pdfstartview={FitH}, pdfnewwindow=true,
colorlinks=false, linkcolor=blue, citecolor=green}

\DeclarePairedDelimiter\floor{\lfloor}{\rfloor}

\newtheorem{theorem}{Theorem}[section]
\newtheorem*{maintheorem}{Main Theorem}

\newtheorem{proposition}[theorem]{Proposition}

\newtheorem{corollary}[theorem]{Corollary}
\newtheorem{example}[theorem]{Example}
\newtheorem{lemma}[theorem]{Lemma}
\theoremstyle{definition}

\newtheorem{definition/notation}[theorem]{Definition/Notation}

\numberwithin{equation}{section}

\parskip=2pt

\begin{document}

\title[Subprime Solutions of the CYBE]{Subprime Solutions of the Classical
Yang-Baxter Equation}

\author{Garrett Johnson}

\address{Department of Mathematics and Physics \\North Carolina Central
University\\Durham, NC 27707\\USA}

\email{gjohns62@nccu.edu}
\subjclass[2010]{16T25; 17B62}

\keywords{classical Yang-Baxter equation, Frobenius functionals, parabolic
subalgebras, Frobenius Lie algebras, Cremmer-Gervais r-matrices, principal
elements}

\begin{abstract}

    We introduce a new family of classical $r$-matrices for the Lie algebra
    $\mathfrak{sl}_n$ that lies in the Zariski boundary of the Belavin-Drinfeld
    space ${\mathcal M}$ of quasi-triangular solutions to the classical
    Yang-Baxter equation.  In this setting ${\mathcal M}$ is a finite disjoint
    union of components; exactly $\phi(n)$ of these components are
    $SL_n$-orbits of single points. These points are the generalized
    Cremmer-Gervais $r$-matrices $r_{i, n}$ which are naturally indexed by
    pairs of positive coprime integers, $i$ and $n$, with $i < n$. A conjecture
    of Gerstenhaber and Giaquinto states that the boundaries of the
    Cremmer-Gervais components contain $r$-matrices having maximal parabolic
    subalgebras $\mathfrak{p}_{i,n}\subseteq \mathfrak{sl}_n$ as carriers. We
    prove this conjecture in the cases when $n\equiv \pm 1$ (mod $i$).  The
    subprime linear functionals $f\in\mathfrak{p}_{i, n}^*$ and the
    corresponding principal elements $H\in\mathfrak{p}_{i, n}$ play important
    roles in our proof. Since the subprime functionals are Frobenius precisely
    in the cases when $n\equiv \pm 1$ (mod $i$), this partly explains our need
    to require these conditions on $i$ and $n$.  We conclude with a proof of
    the GG boundary conjecture in an unrelated case, namely when  $(i, n) = (5,
    12)$, where the subprime functional is no longer a Frobenius functional.

\end{abstract}

\maketitle

\section{Introduction and Main Results}

Throughout this paper, we assume the ground field $\mathbb{F}$ has
characteristic $0$ although most results in this paper also hold for fields of
nearly any other characteristic. In particular, we will fix a pair of positive
integers $i$ and $n$ with $i < n$, and in some of the calculations that follow,
the numbers $2$ and $n$ appear in denominators.  Thus we need $2n$ to be a
nonzero element of the ground field $\mathbb{F}$. For vectors $u,v$ in an
$\mathbb{F}$-vector space $V$, define $u\wedge v := \frac{1}{2}\left(u \otimes
v - v\otimes u\right) \in V\wedge V \subseteq V\otimes V$. For a Lie algebra
$\mathfrak{g}$ and an element in the tensor space $r =\sum a_i\wedge b_i \in
\mathfrak{g} \wedge \mathfrak{g}$ we say that $r$ is a classical $r$-matrix if
the Schouten bracket of $r$ with itself

\begin{equation}
    \langle r, r\rangle : =  [r_{12}, r_{13}] + [r_{12}, r_{23}] + [r_{13},
    r_{23}]
\end{equation}

\noindent is $\mathfrak{g}$-invariant. Here, $r_{12} = r\otimes 1$, $r_{23} =
1\otimes r$, and $r_{13} = \sigma(r_{23})$, where $\sigma$ is the linear
endomorphism of $\mathfrak{g}\otimes\mathfrak{g}\otimes\mathfrak{g}$ that
permutes the first two tensor components: $\sigma(x\otimes y \otimes z) =
y\otimes x\otimes z$. Classical $r$-matrices arise naturally in the context of
Poisson-Lie groups and Lie bialgebras (see e.g. \cite[Chapter 1]{CP}). If
$\langle r, r\rangle = 0$, then $r$ is said to be a solution to the classical
Yang-Baxter equation (CYBE). On the other hand, if $\langle r, r\rangle$ is
non-zero and $\mathfrak{g}$-invariant, $r$ is said to be a solution to the
modified classical Yang-Baxter equation (MCYBE). Following \cite{GG}, we let
${\mathcal C}$ and ${\mathcal M}$ denote the solution spaces of the CYBE and
MCYBE respectively.

In the early 1980's Belavin and Drinfeld \cite{BD} classified the solutions to
the MCYBE for the finite-dimensional complex simple Lie algebras and showed
that the solution space ${\mathcal M}$ is a finite disjoint union of components
of the projective space $\mathbb{P}(\mathfrak{g} \wedge \mathfrak{g})$. The
components of ${\mathcal M}$ are indexed by triples ${\mathcal T} = ({\mathcal
T}, {\mathcal S}_1, {\mathcal S}_2)$, where ${\mathcal S}_1$ and ${\mathcal
S}_2$ are subsets of the set of simple roots of $\mathfrak{g}$ and ${\mathcal
T}: {\mathcal S}_1 \to {\mathcal S}_2$ is a bijection that preserves the
Killing form and satifies a nilpotency condition.  For any BD-triple
$({\mathcal T}, {\mathcal S}_1, {\mathcal S}_2)$, one can always produce
another BD-triple ${\mathcal T}^\prime$ by restricting ${\mathcal T}$ to a
subset of ${\mathcal S}_1$. This gives rise to the notion of a partial ordering
on triples: ${\mathcal T}^\prime < {\mathcal T}$.  An interesting family of
solutions of the MYCBE arises when considering maximal BD-triples with
${\mathcal S}_2$ missing a single root.  These occur only in the case when
$\mathfrak{g} = \mathfrak{sl}_n$, and in this setting there are exactly
$\phi(n)$ BD-triples of this type, where $\phi$ is the Euler-totient function
\cite{GG}. The component of ${\mathcal M}$ corresponding to such a triple can
each be described as the $SL_n$-orbit of a single point $r\in \mathfrak{sl}_n
\wedge \mathfrak{sl}_n$, called the \textit{Cremmer-Gervais $r$-matrix}
\cite{CG, EH}.  Hence, throughout we let $i$ and $n$ be a pair of positive
coprime integers with $i < n$ and let $r_{CG}(i, n)$ denote the corresponding
Cremmer-Gervais $r$-matrix of type $(i, n)$. The BD-triple associated to
$r_{CG}(i, n)$ has the $i$-th simple root missing from ${\mathcal S}_2$. The
Cremmer-Gervais $r$-matrices have explicit formulas which we describe in
Section \ref{CG r-matrices}.  For example when $i = 1$ and $n = 3$, we have

\begin{align*}
    r_{CG}(1, 3) &= 2e_{12}\wedge e_{32} + e_{12}\wedge e_{21} +
    e_{13}\wedge e_{31} + e_{23} \wedge e_{32}
    \\
    &\hspace{14mm}+\frac{1}{3}\left(e_{11}-e_{22}\right) \wedge \left( e_{22}
    - e_{33} \right)\in \mathfrak{sl}_3 \wedge \mathfrak{sl}_3.
\end{align*}

A classification of solutions to the CYBE, on the other hand, is quite
difficult and would entail classifying all quasi-Frobenius Lie algebras
\cite{Stolin}. However, in certain cases there is a straightforward technique
for constructing non-trivial solutions to the CYBE. For instance, if a Lie
algebra $\mathfrak{f}$ has a functional $f\in\mathfrak{f}^*$ such that $x
\mapsto f([x, -])$ is an isomorphism from $\mathfrak{f}$ to $\mathfrak{f}^*$,
then, for an ordered basis $x_1, x_2, \dots, x_d$ of $\mathfrak{f}$, the
bilinear form $\langle x_j, x_k\rangle := f([x_j,x_k])$ on $\mathfrak{f}$ is
invertible and $\sum (f[x_j,x_k])^{-1}x_j\wedge x_k$ produces a non-degenerate
solution to the CYBE. In such a setting, we call $\mathfrak{f}$ a
\textit{Frobenius} Lie algebra and $f$ is called a \textit{Frobenius}
functional. The inverse image of $f$ under the map $x\mapsto f([x, -])$ is
called the \textit{principal element} \cite{GG3}. The $r$-matrix produced from
this construction is said to have \textit{carrier} $\mathfrak{f}$.

Frobenius algebras arise in several other contexts.  For instance, in \cite{O},
Ooms shows that a universal enveloping algebra ${\mathcal U}(L)$ of a Lie
algebra $L$ over a field $\mathbb{F}$ of characteristic $0$ is primitive if and
only if $L$ is Frobenius.  Some interesting families of Frobenius subalgebras
of $\mathfrak{sl}_n$ include certain types of parabolic and biparabolic Lie
algebras (see e.g. \cite{CGM, DK, P}).  In this paper we focus on the maximal
parabolic subalgebras $\mathfrak{p}(i, n) \subseteq \mathfrak{sl}_n$, where the
$i$-th negative root vector is deleted. In \cite{E}, Elashvili showed that
$\mathfrak{p}(i, n)$ is Frobenius if and only if $i$ is relatively prime to
$n$. Hence, if $f\in \mathfrak{p}(i, n)^*$ is a Frobenius functional, then a
solution $r_f\in \mathfrak{sl}_n \wedge \mathfrak{sl}_n$ to the CYBE having
carrier subalgebra $\mathfrak{p}(i, n)$ can be constructed from $f$.

In \cite{GG}, it was shown that the Zariski boundary of ${\mathcal M}$ is
contained in ${\mathcal C}$.  Thus, one would hope that there might be a simple
description of boundary solutions analogous to the BD-classification, but this
seems to be a very difficult problem. In an effort to better understand the
boundary of ${\mathcal M}$, Gerstenhaber and Giaquinto conjectured that the
Zariski boundary of the component of ${\mathcal M}$ containing $r_{CG}(i, n)$
contains an $r$-matrix having parabolic carrier $\mathfrak{p}(i, n)$. They
prove their conjecture in the case when $i=1$ \cite{GG}. In Section \ref{main
result section}, we extend these results by proving their conjecture also holds
in the cases when $n \equiv \pm 1 $ (mod $i$).  As it turns out, the subprime
functional

\[
    f_{subprime} := \sum_{i < j\leq n} e_{j-i,j}^* + \sum_{1\leq j < i} e_{j+1,
    j}^* \in \mathfrak{p}(i, n)^*
\]

\noindent is Frobenius if and only if $n\equiv \pm 1$ (mod $i$) (see e.g.
\cite[Section 9.1]{GG2}).  Due to the key role that the subprime functional
plays in our proofs, we refer to the cases when $n \equiv \pm 1$ (mod $i$) as
\textit{subprime} cases.

On the way towards proving that the boundary conjecture holds in the subprime
cases, we define in Section \ref{subsection phi} a rational map $\Phi:
\mathbb{F}^\times \to SL_n$ (that depends on $i$ and $n$), where
$\mathbb{F}^\times = \mathbb{F}\backslash\{0\}$.  By rational, we mean the
matrix entries in $\Phi(t)$ are polynomials in $t^{\pm 1}$. The principal
element associated to the subprime functional is needed in the construction of
$\Phi$, and, in fact, all entries of $\Phi(t)$ have the form $ct^p$ with $c$
and $p$ integers. For example, when $i = 3$ and $n = 7$ our formula for $\Phi$
is given by

\[
    \Phi: t\mapsto
    \begin{bmatrix}
        t^{-2} & 0 & 0 & -2t^{-2} & 0 & 0 & t^{-2} \\
        -2t^{12} & t^{12} & 0 & t^{12} & -t^{12} & 0 & t^{12} \\
        t^{26} & -t^{26} & t^{26} & -2t^{26} & t^{26} & -t^{26} &t^{26} \\
        0 & 0 & 0 & t^{-16} & 0 & 0 & -t^{-16} \\
        0 & 0 & 0 & -2t^{-2} & t^{-2} & 0 & -t^{-2} \\
        0 & 0 & 0 & t^{12} & -t^{12} & t^{12} & -t^{12} \\
        0 & 0 & 0 & 0 & 0 & 0 & t^{-30}
    \end{bmatrix}
    \in SL_n.
\]

We use the map $\Phi$ to produce a one-parameter family of solutions to the
MYCBE lying in $SL_n$-orbit of the Cremmer-Gervais $r$-matrix. More precisely,
we show that $\Phi(t).r_{CG}(i, n)$ is of the form $r^\prime + t^{2n}b$, where
$r^\prime\in{\mathcal M}$ and $b\in{\mathcal C}$. Here $\Phi(t)\in SL_n$ acts
on $r_{CG}(i, n)$ via the adjoint action.  In a sense the map $\Phi$ is used to
deform the Cremmer-Gervais $r$-matrix, and $t$ can be viewed as a deformation
parameter. Under mild conditions (which are met in our case) it follows that
the coefficient of the highest degree term in $t$ is a boundary solution to the
CYBE \cite[Proposition 5.1]{GG}. Hence, $b$ is a boundary solution, which we
refer to as a \textit{subprime} boundary solution of the classical Yang-Baxter
equation. Our main result is the following theorem that proves the
Gerstenhaber-Giaquinto boundary conjecture holds in the subprime cases.

\begin{maintheorem}

        Let $i$ and $n$ be a pair of positive integers with $i<n$ so that
        $n\equiv\pm 1$ (mod $i$) and let $r = r_{CG}(i, n)$ denote the
        Cremmer-Gervais $r$-matrix of type $(i, n)$.  Then

        \begin{enumerate}

            \item $\Phi(t).r$ is of the form $r^\prime + t^{2n}b$, where
                $r^\prime\in{\mathcal M}$ and $b\in{\mathcal C}$,

            \item the $r$-matrices $r^\prime$ and $b$ appearing in part (1) lie
                in the Zariski closure of the component of ${\mathcal M}$
                containing $r$.

        \item the $r$-matrix $b$ is a boundary solution to the classical
                Yang-Baxter equation,

            \item the carrier of $b$ is the maximal parabolic subalgebra
                $\mathfrak{p}(i, n) \subseteq \mathfrak{sl}_n$.

        \end{enumerate}

\end{maintheorem}

In Section \ref{main result section}, we give explicit formulas for the
$r$-matrices $r^\prime$ and $b$ appearing in the statement of our main result.
Interestingly, the principal element $H$ associated to the subprime linear
functional plays an important role in our construction of $\Phi$. In
Proposition \ref{H_M} we prove a curious relationship between $r^\prime$ and
$H$, specifically that $r^\prime$ is in the kernel of the adjoint action of
$H$.  Since the subprime functional is a Frobenius functional in
$\mathfrak{p}(i, n)^*$ if and only if $n \equiv \pm 1$ (mod $i$), this partly
explains the need for us to assume these conditions on $i$ and $n$ in our
proof.

In Section \ref{closing}, we conclude with a proof of the
Gerstenhaber-Giaquinto boundary conjecture in a case unrelated to the subprime
cases, namely when $i = 5$ and $n = 12$, where the subprime functional is no
longer Frobenius.

\vspace{3mm}

\noindent \textit{Acknowledgements:} The author was partially supported by the
United States Department of Homeland Security grant DHS-16-ST-062-000004.

\section{Solutions of the MCYBE}

\subsection{Belavin-Drinfeld Classification}

\label{BD classification}

Solutions to the modified classical Yang-Baxter equation (MCYBE) for a finite
dimensional complex simple Lie algebra $\mathfrak{g}$ were constructively
classified by Belavin and Drinfeld \cite{BD}.  Their classification result says
that the solution space ${\mathcal M}$ is a finite disjoint union of
quasi-projective subvarieties of $\mathbb{P}(\mathfrak{g} \wedge
\mathfrak{g})$.  The components of ${\mathcal M}$ are indexed by combinatorial
objects, called BD-triples, on the Dynkin diagram of $\mathfrak{g}$. A
BD-triple  ${\mathcal T}$ puts a particular partial ordering $\preceq$ on the
positive root vectors of $\mathfrak{g}$. This ordering is used to define
$\alpha = 2\sum_{\rho\prec\mu} e_\rho \wedge e_\mu
\in\mathfrak{g}\wedge\mathfrak{g}$ and a subvariety $\beta({\mathcal T})$ of
$\mathfrak{h} \wedge \mathfrak{h}$, where $\mathfrak{h}$ is a Cartan subalgebra
of $\mathfrak{g}$. The BD classification result states that each $r\in{\mathcal
M}$ is equivalent, up to scaling and applying inner automorphisms of
$\mathfrak{g}$, to a unique $r$-matrix of the form $\alpha + \beta + \gamma$,
where $\beta\in \beta({\mathcal T})$ and $\gamma$ is a certain fixed element of
$\mathfrak{g} \wedge \mathfrak{g}$ independent of ${\mathcal T}$. For further
details on the BD-classification, see e.g. \cite[Chapter 1]{CP}.

For our purposes, rather than give a full description of the Belavin-Drinfeld
classification, we turn our attention to the case when $\mathfrak{g} =
\mathfrak{sl}_n$, the Lie algebra of traceless $n\times n$ matrices.  Let
$\mathfrak{g} = \mathfrak{n}^-\oplus \mathfrak{h} \oplus \mathfrak{n}^+$ denote
the triangular decomposition. In this setting, a BD-triple ${\mathcal T} =
({\mathcal T}, {\mathcal S}_1, {\mathcal S}_2)$ is a bijection ${\mathcal T}:
{\mathcal S}_1\to{\mathcal S}_2$, where ${\mathcal S}_1$ and ${\mathcal S}_2$
are subsets of $[1, n-1]$  (here and below we use the notation
$[a, b]$ for $\{a,a+1,\dots, b\}$), satisfying the
conditions

\begin{enumerate}

    \item (adjacency preserving) $|{\mathcal T}(j) - {\mathcal T}(k)| = 1$ if
        and only if $|j - k| = 1$,

    \item (local nilpotency) for every $j\in{\mathcal S}_1$ there exists
        $N\in\mathbb{N}$ so that ${\mathcal T}^N(j)\notin{\mathcal S}_1$.

\end{enumerate}

\noindent A BD-triple defines a partial ordering $\prec$ on $[1, n-1]$
by $j\prec k$ if and only if there exists $N\in\mathbb{N}$ so that ${\mathcal
T}^N(j) = k$. This ordering extends to the set $\{e_{jk} \in \mathfrak{sl}_n
\mid 1\leq j<k\leq n\}$ of positive root vectors of $\mathfrak{sl}_n$: put
$e_{jk}\prec e_{\ell m}$ if and only if $m = k-j+\ell$ and $j + s \prec \ell +
s $ for all $s\in[0,k-j-1]$. We define

\begin{equation}
    \alpha := 2\sum e_{jk}\wedge e_{m\ell} \in \mathfrak{n}^+ \wedge
    \mathfrak{n}^-,
\end{equation}

\noindent where the sum is over all tuples $(j,k,\ell,m)\in[1,n]^4$
such that $e_{jk} \prec e_{\ell m}$.  Next let $h_j := e_{j,j} - e_{j+1,j+1}$
for $j\in[1, n - 1]$ and let $e_{jk}^*\in\mathfrak{g}^*$ be the linear
functional that returns the $(j,k)$-entry.  Next define the subvariety
$\beta({\mathcal T}) \subseteq \mathfrak{h} \wedge \mathfrak{h}$ by

\begin{equation}
    \beta({\mathcal T}) :=
    \Big\{\beta\in\mathfrak{h}\wedge\mathfrak{h} \mid (1 \otimes (e_{{\mathcal
    T}(j), {\mathcal T}(j) + 1}^* - e_{j, j + 1}^*))\beta = \frac{1}{2}(
    h_{{\mathcal T}(j)} + h_j)\text{ for all } j \in {\mathcal S}_1\Big\}.
\end{equation}

\noindent As a variety $\beta({\mathcal T})$ has dimension $d(d-1)/2$, where $d
= n - 1 - |{\mathcal S}_1|$.  Finally define

\begin{equation}
    \gamma := \sum_{1\leq k < \ell\leq n} e_{k\ell} \wedge e_{\ell k} \in
    \mathfrak{n}^+ \wedge \mathfrak{n}^-.
\end{equation}

Observe that the solution space ${\mathcal M}$ is stable under the adjoint
action of the special linear group $SL_n$ and also by rescaling by any nonzero
scalar $\lambda$. We call $r, r^\prime \in {\mathcal M}$ \textit{equivalent}
$r$-matrices if there exists a nonzero scalar $\lambda$ and $g\in SL_n$ so that
$\lambda g.r=r^\prime$.  Belavin and Drinfeld's classification result
(specialized to the setting $\mathfrak{g} = \mathfrak{sl}_n$) asserts that $r =
\alpha + \beta + \gamma \in \mathfrak{sl}_n \wedge \mathfrak{sl}_n$ is a
solution to the MCYBE for every BD-triple ${\mathcal T} = ({\mathcal T},
{\mathcal S}_1, {\mathcal S}_2$) and $\beta\in \beta(\mathcal{T})$, and
conversely any solution to the MCYBE is equivalent to a unique $r$-matrix of
this form.

\subsection{Cremmer-Gervais \texorpdfstring{$r$-matrices}{r-matrices}}

\label{CG r-matrices}

We turn our attention to a particularly interesting family of $r$-matrices
called Cremmer-Gervais $r$-matrices (see \cite{CG, EH}). These are precisely
the solutions to the MCYBE for the Lie algebra $\mathfrak{g} = \mathfrak{sl}_n$
associated to \textit{maximal} BD-triples, i.e. those with ${\mathcal S}_1$
having the maximum possible cardinality, $\#{\mathcal S}_1 = n - 2$.

Our aim in this section is to explicitly describe the Cremmer-Gervais
$r$-matrices.  First of all there are exactly $\phi(n)$ maximal BD-triples,
where $\phi$ is the Euler-totient function. Hence, throughout the rest of this
section we fix a pair of positive coprime integers $i$ and $n$ with $i < n$ and
let $r_{CG}(i, n)\in{\mathcal M}$ denote the Cremmer-Gervais $r$-matrix of type
$(i, n)$.  We define a bijection ${\mathcal T}_{i, n}$ from the set ${\mathcal
S}_1 := [1,n-i-1]\cup [n-i+1,n-1]$ to the set ${\mathcal S}_2
:= [1,i-1] \cup [i+1, n-1]$ by ${\mathcal T}_{i, n}(j) = (j +
i) \text{ mod } n$ and construct a directed graph $\Gamma_{CG}(i, n)$ having
vertices labelled $1, 2, \dots, n-1$ and edges $j\to{\mathcal T}_{i, n}(j)$ for
every $j\in{\mathcal S}_1$. The graph $\Gamma_{CG}(i, n)$ is, in fact,
isomorphic to $i\to 2i \to \cdots \to n - i$, where the numbers are assumed to
be reduced modulo $n$.

\begin{figure}[h!]
\[
\begin{tikzpicture}
    \tikzstyle{every node}=[draw,circle,fill=black,minimum size=4pt,
    inner sep=1pt]
    \tikzset{->-/.style={decoration={markings,
    mark=at position #1 with {\arrow[scale=1.5,>=stealth]{>}}},
    postaction={decorate}}}
    \draw (0,0) node[label=left:$1$] (0) {};
    \draw (1,0) node[label=left:$2$] (1) {};
    \draw (2,0) node[label=left:$3$] (2) {};
    \draw (3,0) node[label=left:$4$] (3) {};
    \draw (4,0) node[label=left:$5$] (4) {};
    \draw (5,0) node[label=left:$6$] (5) {};
    \draw (6,0) node[label=left:$7$] (6) {};
    \draw (7,0) node[label=left:$8$] (7) {};
    \draw (8,0) node[label=left:$9$] (8) {};
    \draw (9,0) node[label=left:$10$] (9) {};
    \draw (10,0) node[label=left:$11$] (10) {};
    \draw [->-=.5] (0) to [bend left=50] (5);
    \draw [->-=.5] (1) to [bend left=50] (6);
    \draw [->-=.5] (2) to [bend left=50] (7);
    \draw [->-=.5] (3) to [bend left=50] (8);
    \draw [->-=.5] (4) to [bend left=50] (9);
    \draw [->-=.5] (5) to [bend left=50] (10);
    \draw [->-=.5] (7) to [bend left=50] (0);
    \draw [->-=.5] (8) to [bend left=50] (1);
    \draw [->-=.5] (9) to [bend left=50] (2);
    \draw [->-=.5] (10) to [bend left=50] (3);
\end{tikzpicture}
\]

\caption{The Cremmer - Gervais graph $\Gamma_{CG}(5, 12)$}
\label{CG graph example}
\end{figure}

\noindent We will refer to the edges $j\to k$ with $j<k$ as the forward arrows,
whereas the edges $j\to k$ with $j > k$ are called the backward arrows. In
Figure \ref{CG graph example}, we draw the forward arrows in the top half of
the graph and the backward arrows in the bottom half.  Recall the map
${\mathcal T}_{i, n}$ defines an ordering $\prec$ on $[1, n-1]$.
Equivalently $j\prec k$ if and only if the path in $\Gamma_{CG}(i, n)$ that
starts at vertex $j$ eventually reaches vertex $k$ by following the directed
edges.  In fact, the ordering is simply $i \prec 2i \prec 3i \prec \dots \prec
n - i$, where the numbers are assumed to be reduced modulo $n$.

Let $\alpha_{\rightarrow}$ denote the $\alpha$-part of $r_{CG}(i, n)$ obtained
by considering only the forward arrows, and let $\alpha_{\leftarrow}$ denote
the terms introduced to $\alpha$ by including the backward arrows. Thus,
$\alpha = \alpha_{\rightarrow} +\alpha_{\leftarrow}$.  To describe the
$\alpha$-part, we find it convenient to first define the following elements of
the general linear Lie algebra $\mathfrak{gl}_n$:

\begin{align}
    \xi_{k\ell} := \sum_{p\in\mathbb{Z}} e_{k+ip,\ell+ip} \in\mathfrak{gl}_n,
    & &
    \eta_{k\ell} := \sum_{p\geq 0} e_{k+ip,\ell+ip} \in\mathfrak{gl}_n,
\end{align}

\noindent for all $k, \ell \in\mathbb{Z}$.  Throughout, we treat
$\mathfrak{sl}_n$ as a subset of $\mathfrak{gl}_n$ and follow the convention
$e_{jk} := 0$ if either subscript is out of range, i.e. if either $j$ or $k$ is
greater than $n$ or less than $1$.  For integers $p,q\in\mathbb{Z}$ with $q>0$,
we write $p \text{ mod }q$ to denote the reduction of $p$ modulo $q$ in
$[0, q-1]$ .  The $\alpha$-parts of the Cremmer-Gervais $r$-matrix
$r_{CG}(i, n)$ are

\begin{equation}
    \label{alpha_forward}
    \alpha_{\rightarrow} = 2\hspace{-4mm}\sum_{1\leq k<\ell\leq n-i}
    \hspace{-4mm} e_{k\ell}\wedge \eta_{\ell+i,k+i} \in \mathfrak{n}^+ \wedge
    \mathfrak{n}^-
\end{equation}

\noindent and

\begin{equation}
    \label{alpha_backward}
    \alpha_{\leftarrow} = 2\sum \xi_{j + (n \text{ mod }i), k + (n
    \text{ mod }i)}\wedge \xi_{m \ell} \in \mathfrak{n}^+ \wedge
    \mathfrak{n}^-,
\end{equation}

\noindent where the sum above runs over all tuples $(j,k,\ell, m) \in [1, i]^4$
such that $e_{jk} \preceq e_{\ell m}$ in the partial ordering on the positive
root vectors of $\mathfrak{sl}_i$ (in particular, $1\leq j < k\leq i$ and
$1\leq \ell < m \leq i$) induced by the Cremmer-Gervais graph $\Gamma_{CG}(i -
(n \text{ mod } i), i)$. Equation \ref{alpha_backward} provides a way to
iteratively construct $\alpha_{\leftarrow}$. The number of steps needed to
construct $\alpha_{\leftarrow}$ equals the number of steps until the Euclidean
algorithm, starting with the integers $n$ and $i$, terminates.

The varieties $\beta({\mathcal T}_{i,n})$ associated to maximal BD-triples each
reduce to a single point. In \cite{GG} a formula for this point is given by

\begin{equation}
    \beta := \sum_{1\leq j < \ell\leq n}
    \left(-1+\frac{2}{n}\left((j-\ell)i^{-1}\text{ mod }n\right)\right)
    e_{jj}\wedge e_{\ell\ell} \in \mathfrak{h} \wedge \mathfrak{h}.
\end{equation}

\noindent The Cremmer-Gervais $r$-matrix $r_{CG}(i, n)$ is equal to $r_{CG}(i,
n) = \alpha_{\rightarrow} + \alpha_{\leftarrow} + \beta + \gamma \in {\mathcal
M}$.

\section{Solutions of the CYBE}

\subsection{Quasi-Frobenius and Frobenius subalgebras}

\label{section_CYBEsolutions}

The solution space ${\mathcal C}$ to the classical Yang-Baxter equation (CYBE),
on the other hand, is difficult to describe as there is not a constructive
classification analogous to the classification result of Belavin and Drinfeld
for the solution space ${\mathcal M}$. However we recall a homological
description of solutions to the CYBE \cite{Stolin}.

For $r\in{\mathcal C}$, let $\mathfrak{f}\subseteq\mathfrak{g}$ be the
\textit{carrier} of $r$. The carrier is the Lie subalgebra of $\mathfrak{g}$
spanned by $\{(\xi\otimes 1)r \mid \xi \in \mathfrak{g}^*\}$. The map
$\check{r}:\mathfrak{f}^*\to \mathfrak{f}$ defined by $\xi\mapsto (\xi\otimes
1)r$ is a linear isomorphism and induces a Lie algebra $2$-cocycle $B:
\mathfrak{f}\times\mathfrak{f}\to\mathbb{F}$ given by $(x, y)\mapsto
\langle\check{r}^{-1}(x), y\rangle$. Thus, $\mathfrak{f}$ is a quasi-Frobenius
Lie algebra. This process be can inverted to give a one-to-one correspondence
between quasi-Frobenius Lie algebras $(\mathfrak{f}, B)$ and $r$-matrices in
${\mathcal C}$ having carrier $\mathfrak{f}$. Thus classifying all solutions to
the CYBE would entail classifying all quasi-Frobenius subalgebras
$\mathfrak{f}\subseteq\mathfrak{g}$.

If the cocycle $B$ corresponding to $r$ is a coboundary, this means that $B$ is
of the form $B(x, y) = f([x, y])$ for some linear functional
$f\in\mathfrak{f}^*$. In such cases we say that $r$ \textit{admits} the
functional $f$ and we call $\mathfrak{f}$ a \textit{Frobenius Lie algebra} and
call $f$ a \textit{Frobenius functional}.  Here, the linear map $x\mapsto
f([x,-])$ is an isomorphism from $\mathfrak{f}$ to its dual space
$\mathfrak{f}^*$.  The inverse image of $f$ under this map is called the
\textit{principal element}, which is the unique $H\in\mathfrak{f}$ satisfying
$f([H,x]) = f(x)$ for all $x\in\mathfrak{f}$ (see e.g. \cite{GG3}).

\begin{example}

    The maximal parabolic subalgebra $\mathfrak{p}(i, n) \subseteq
    \mathfrak{sl}_n$ obtained by deleting the $i$-th negative simple root is
    Frobenius if and only if $i$ and $n$ are relatively prime \cite{E} (see
    Figure \ref{maximal parabolic figure}).

    \begin{figure}[h!]

        \[
            \mathfrak{p}(2, 5) =
                \begin{bmatrix}
                    * & * & * & * & * \\
                    * & * & * & * & * \\
                    0 & 0 & * & * & * \\
                    0 & 0 & * & * & * \\
                    0 & 0 & * & * & * \\
                \end{bmatrix}
            \hspace{10mm}
            \mathfrak{p}(4, 9) =
                \begin{bmatrix}
                    * & * & * & * & * & * & * & * & *\\
                    * & * & * & * & * & * & * & * & *\\
                    * & * & * & * & * & * & * & * & *\\
                    * & * & * & * & * & * & * & * & *\\
                    0 & 0 & 0 & 0 & * & * & * & * & *\\
                    0 & 0 & 0 & 0 & * & * & * & * & *\\
                    0 & 0 & 0 & 0 & * & * & * & * & *\\
                    0 & 0 & 0 & 0 & * & * & * & * & *\\
                    0 & 0 & 0 & 0 & * & * & * & * & *\\
                \end{bmatrix}
        \]

    \caption{Maximal parabolic Frobenius Lie algebras $\mathfrak{p}(2, 5)$ and
    $\mathfrak{p}(4, 9)$}
    \label{maximal parabolic figure}
    \end{figure}

\end{example}

\begin{example}

    \label{principal_elt_subprime}

    The subprime functional $f = \displaystyle{\sum_{i < j\leq n} e_{j-i,j}^* +
    \sum_{1\leq j < i} e_{j+1, j}^* \in \mathfrak{p}(i, n)^*}$ is Frobenius if
    and only if $n\equiv \pm 1$ (mod $i$) (see e.g. \cite[Section 9.1]{GG2}).
    The associated principal element is

    \begin{equation}
        H = \operatorname{diag}(0, 1, 2, \dots, i-1, -1, 0, 1, \dots, i-2, -2,
        -1, 0, \dots) + \Theta {\bf I}_n \in \mathfrak{p}(i, n).
    \end{equation}

    \noindent where $\Theta = \frac{n-1}{2n} \left(\frac{n+1}{i} - i\right)$ is
    a scalar making $H$ traceless. When $i = 1$ the subprime functional reduces
    to the \textit{prime} functional and, up to a scalar multiple, $H$ reduces
    to the semisimple element of Konstant's principal three-dimensional
    subalgebra of $\mathfrak{sl}_n$ \cite{Kostant TDS}.

\end{example}

\subsection{Boundary Solutions of the CYBE}

In \cite{GG} Gerstenhaber and Giaquinto introduce boundary solutions to the
CYBE and show, among other things, that all points lying in the Zariski
boundary of ${\mathcal M}$ are contained in ${\mathcal C}$. Thus one would hope
that there might be a fairly simple description of boundary $r$-matrices
analogous to the Belavin-Drinfeld classification.  We will make use of the
following theorem and corollary found in \cite{GG}, which together provide a
general technique for constructing a large number of examples of boundary
$r$-matrices. In the following theorem and corollary the variable $t$ is
treated as a formal parameter; the base field $\mathbb{F}$ is enlarged to
$\mathbb{F}[t]$.

\begin{theorem}

    \label{constructing boundary solutions, 1}

    If $r\in{\mathcal M}$ and $r_t = r + r_1t + r_2t^2 + \cdots
    r_mt^m\in{\mathcal M}$ with $\langle r, r\rangle = \langle r_t,
    r_t\rangle$, then $r_m$ is a boundary solution to the CYBE.

\end{theorem}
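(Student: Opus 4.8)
The plan is to read off the identity $\langle r_m, r_m\rangle = 0$ from the hypothesis by comparing top-degree coefficients in $t$, and then to realize $r_m$ as a projective limit of points of $\mathcal{M}$ lying outside $\mathcal{M}$. First I would observe that the map $r \mapsto \langle r, r\rangle = [r_{12}, r_{13}] + [r_{12}, r_{23}] + [r_{13}, r_{23}]$ is a homogeneous quadratic map $\mathfrak{g}\wedge\mathfrak{g} \to \mathfrak{g}^{\otimes 3}$, so (since $\operatorname{char}\mathbb{F} = 0$) it polarizes to a symmetric bilinear pairing $\langle\,\cdot\,,\cdot\,\rangle$. Substituting $r_t = \sum_{k=0}^{m} r_k t^k$ with $r_0 = r$ and expanding by bilinearity gives
\[
    \langle r_t, r_t\rangle = \sum_{j,k=0}^{m} \langle r_j, r_k\rangle\, t^{j+k},
\]
a polynomial in $t$ whose coefficient of the top power $t^{2m}$ is exactly $\langle r_m, r_m\rangle$.

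Next I would invoke the hypothesis $\langle r, r\rangle = \langle r_t, r_t\rangle$. The left-hand side is independent of $t$, so every positive-degree coefficient on the right must vanish; in particular the coefficient of $t^{2m}$ vanishes, giving $\langle r_m, r_m\rangle = 0$. Hence $r_m \in \mathcal{C}$, which establishes the CYBE half of the conclusion.

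It then remains to see that $r_m$ lies in the Zariski boundary of $\mathcal{M}$. Here I would pass to $\mathbb{P}(\mathfrak{g}\wedge\mathfrak{g})$, where both the condition defining $\mathcal{M}$ and the condition $\langle r, r\rangle = 0$ are scale-invariant. Since $t^{-m} r_t = r_m + r_{m-1} t^{-1} + \cdots + r\, t^{-m}$ represents the same projective point $[r_t]$, the curve $t \mapsto [r_t]$ extends across $t = \infty$ with limiting value $[r_m]$ (note $r_m \neq 0$, as $r_t$ has degree exactly $m$). For each $t \in \mathbb{F}^\times$ the specialization satisfies $\langle r_t, r_t\rangle = \langle r, r\rangle$, which is $\mathfrak{g}$-invariant and nonzero because $r \in \mathcal{M}$; thus $[r_t] \in \mathcal{M}$, and the limit point $[r_m]$ lies in the Zariski closure $\overline{\mathcal{M}}$. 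On the other hand, $r_m \in \mathcal{C}$ means $\langle r_m, r_m\rangle = 0$, so $[r_m] \notin \mathcal{M}$, since every solution of the MCYBE has nonzero self-bracket by definition. Therefore $[r_m] \in \overline{\mathcal{M}} \setminus \mathcal{M}$, and $r_m$ is a boundary solution to the CYBE.

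I expect the main obstacle to be this final boundary step rather than the coefficient-vanishing step, which is essentially formal. The care required is twofold: the closure must be taken in the projective ambient space (so that the rescaling $t^{-m} r_t$ is legitimate and the limit is meaningful), and one must use the disjointness $\mathcal{M} \cap \mathcal{C} = \varnothing$ so that membership of $r_m$ in $\mathcal{C}$ genuinely certifies it as a boundary point, and not an interior point, of $\mathcal{M}$.
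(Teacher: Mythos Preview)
The paper does not actually supply a proof of this theorem: it is quoted from \cite{GG} (Gerstenhaber--Giaquinto) and invoked as a black box, so there is no in-paper argument to compare yours against. That said, your argument is correct and is essentially the standard proof one finds in \cite{GG}: expand $\langle r_t, r_t\rangle$ as a degree-$2m$ polynomial in $t$, read off $\langle r_m, r_m\rangle = 0$ from the vanishing of the top coefficient, and then obtain the boundary statement by passing to $\mathbb{P}(\mathfrak{g}\wedge\mathfrak{g})$ and letting $t\to\infty$ along the curve $[r_t]$. Your remarks about taking the closure projectively and using $\mathcal{M}\cap\mathcal{C}=\varnothing$ are exactly the points that make the argument go through. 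One very small comment: you do not really need to invoke polarization abstractly, since the Schouten bracket is visibly bilinear in its two arguments from its definition via commutators; but this is purely cosmetic.
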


\begin{corollary}

    \label{constructing boundary solutions, 2}

    Suppose $x\in\mathfrak{g}$ is nilpotent and $r\in{\mathcal M}$. Then
    $exp(tx)r$ has the form $exp(tx)r = r + tr_1 + \cdots + t^mr_m$ with
    $r_m\in {\mathcal C}$ a boundary solution to the CYBE.

\end{corollary}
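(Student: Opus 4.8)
The plan is to deduce this corollary directly from Theorem \ref{constructing boundary solutions, 1} by exhibiting $\exp(tx).r$ as a polynomial family in ${\mathcal M}$ whose Schouten self-bracket does not depend on $t$. First I would observe that since $x$ is nilpotent, so is $\operatorname{ad}(x)$, and hence the diagonal operator $\operatorname{ad}(x)\otimes 1 + 1\otimes\operatorname{ad}(x)$ acting on $\mathfrak{g}\wedge\mathfrak{g}$ is nilpotent as well. Consequently $\exp(tx)$, acting by the adjoint action, is given by a terminating exponential series, so that $\exp(tx).r = r + tr_1 + \cdots + t^mr_m$ is genuinely polynomial in $t$ with constant term $r$ (its value at $t=0$). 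Moreover $\exp(tx)$ is unipotent and therefore lies in $SL_n$ for every value of $t$; since ${\mathcal M}$ is stable under the adjoint action of $SL_n$, the family $r_t := \exp(tx).r$ lies in ${\mathcal M}$.

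The key remaining point is to show $\langle r_t, r_t\rangle = \langle r, r\rangle$. Here I would invoke the $SL_n$-equivariance of the Schouten bracket: because $\operatorname{Ad}(g)$ is a Lie algebra automorphism for $g\in SL_n$, each of $r_{12}, r_{13}, r_{23}$ and each internal bracket $[\cdot,\cdot]$ transforms compatibly, so that $\langle g.r, g.r\rangle = g.\langle r, r\rangle$, where on the right $g$ acts diagonally through $\operatorname{Ad}(g)^{\otimes 3}$ on $\mathfrak{g}\otimes\mathfrak{g}\otimes\mathfrak{g}$. Taking $g = \exp(tx)$ gives $\langle r_t, r_t\rangle = \exp(tx).\langle r, r\rangle$. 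Since $r\in{\mathcal M}$, the element $\langle r, r\rangle$ is $\mathfrak{g}$-invariant, hence annihilated by the diagonal action of $x$; therefore every higher-order term in the exponential vanishes and $\exp(tx).\langle r, r\rangle = \langle r, r\rangle$. This is precisely the hypothesis of Theorem \ref{constructing boundary solutions, 1}, and applying that theorem shows that the top coefficient $r_m$ is a boundary solution to the CYBE, so in particular $r_m\in{\mathcal C}$.

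I do not anticipate a serious obstacle, as the corollary is essentially a specialization of the theorem to the one-parameter family generated by a nilpotent element. The one step genuinely requiring care is confirming that the self-bracket stays constant along the deformation, which rests on combining the equivariance of the Schouten bracket with the $\mathfrak{g}$-invariance of $\langle r, r\rangle$ (equivalently, the fact that the nilpotent $x$ annihilates $\langle r, r\rangle$). I would also double-check the elementary bookkeeping that the expansion involves only nonnegative powers of $t$ and that its constant term is exactly $r$, so that Theorem \ref{constructing boundary solutions, 1} applies verbatim.
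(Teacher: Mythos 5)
Your proof is correct. The paper itself gives no proof of this corollary --- both Theorem \ref{constructing boundary solutions, 1} and the corollary are quoted from \cite{GG} as known tools --- and your derivation (nilpotency of $\operatorname{ad}(x)$ giving a terminating exponential and hence a polynomial family with constant term $r$, stability of ${\mathcal M}$ under inner automorphisms, and constancy of $\langle r_t, r_t\rangle$ via equivariance of the Schouten bracket combined with the $\mathfrak{g}$-invariance of $\langle r, r\rangle$) is precisely the intended specialization of the theorem, i.e.\ the standard argument.
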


\begin{example}

    \label{example, boundary solution}

    Let $e = e_{12} \in \mathfrak{sl}_2$, $f = e_{21} \in \mathfrak{sl}_2$, and
    $h = e_{11} - e_{22} \in \mathfrak{sl}_2$ be the standard basis of
    $\mathfrak{sl}_2$. We have $r := e\wedge f\in{\mathcal M}$ and $exp(t\cdot
    e)r = r + t(e\wedge h)$. Therefore, $e \wedge h\in{\mathcal C}$ is a
    boundary $r$-matrix.

\end{example}

\section{Towards a Proof of the Main Result}

We next introduce four objects that play important roles in our proof of the
Gerstenhaber-Giaquinto boundary conjecture in the \textit{subprime} cases,
namely the cases when $n \equiv \pm 1$ (mod $i$).  These objects are (1) a Lie
subalgebra $\mathfrak{n}$ of $\mathfrak{sl}_n$, (2) an $\mathfrak{n}$-module
$M$, (3) the principal element $H \in \mathfrak{p}(i, n)$ corresponding to the
subprime functional, and (4) a map $\Phi: \mathbb{F}^\times \to SL_n$.  After a
discussion of some key properties of $\mathfrak{n}$, $M$, $H$, and $\Phi$ we
will then be in a position to prove our main result.

\begin{itemize}

    \item From now on, unless stated otherwise, we assume $i$ and $n$ are a
        pair of positive integers with $i < n$ such that $n\equiv \pm 1$ (mod
        $i$).

\end{itemize}

\subsection{The Lie Algebra \texorpdfstring{$\mathbf{\mathfrak{n}}$}{n}}
\label{subsection n}

We define $\epsilon := 1$ if $n \text{ mod } i = 1$ and $\epsilon := -1$ if $n
\text{ mod } i \neq 1$. For $k\in\mathbb{Z}$ we let $c_k := \floor*{\frac{n -
k}{i}}$ and $a_k := \epsilon ((-nk) \text{ mod }i)$.  Next define the matrices

\begin{equation}
    X := \sum_{1\leq j\leq n - i}c_je_{j,j+i} \in \mathfrak{sl}_n,
    \hspace{10mm}
    Z := \sum_{1\leq j < i}a_j\xi_{j+1,j} \in \mathfrak{sl}_n.
\end{equation}

Let $\mathfrak{n}$ be the Lie subalgebra of $\mathfrak{sl}_n$ generated by the
matrices $X$ and $Z$.  The Lie algebra $\mathfrak{n}$ is a nilpotent Lie
subalgebra of $\mathfrak{sl}_n$. When $i=1$, $\mathfrak{n} = \mathbb{F}X$. For
$i > 1$ the matrices $Z, X, (\operatorname{ad}Z)X, (\operatorname{ad}Z)^2X,
\dots, (\operatorname{ad}Z)^{i-1}X$ form a linear $\mathbb{F}$-basis for
$\mathfrak{n}$, the $\mathbb{F}$-span of $X,(\operatorname{ad}Z)X,
(\operatorname{ad}Z)^2X, \dots, (\operatorname{ad}Z)^{i-1}X$ is an abelian Lie
subalgebra of $\mathfrak{n}$ of codimension 1, and $(\operatorname{ad}Z)^iX=0$.

\subsection{The \texorpdfstring{$\mathbf{\mathfrak{n}}$-module
$\mathbf{M}$}{n-module M}}

\label{subsection M}

Let $r := r_{CG}(i, n)$ be the Cremmer-Gervais $r$-matrix of type $(i, n)$ and
let $M$ be the $\mathfrak{n}$-module generated by $r$, i.e. $M = {\mathcal
U}(\mathfrak{n}).r \subseteq \mathfrak{sl}_n \wedge \mathfrak{sl}_n$ where
${\mathcal U}(\mathfrak{n})$ is the universal enveloping algebra of
$\mathfrak{n}$. Our aim is to explicitly describe the $\mathfrak{n}$-module $M$
by giving a list of basis vectors of $M$ and providing formulas for the actions
of $X$ and $Z$ on each basis vector. In computing these formulas we sometimes
find it convenient to write $\alpha_{\leftarrow}$ and $Z$ as

\begin{equation}
    \alpha_{\leftarrow} = 2\sum_{1\leq \ell \leq j <k \leq i}E_{jk}\wedge E_{k
    - j + \ell - 1, \ell - 1},
    \hspace{10mm}
    Z = \sum_{1\leq j<i}(i-j)E_{j,j-1}.
\end{equation}

\noindent where, for $k,\ell\in\mathbb{Z}$, $E_{k\ell} := \xi_{k + 1, \ell +
1}$ if $n \text{ mod }i = 1$ and $E_{kl} := -\xi_{i - \ell, i - k}$ if $n
\text{ mod } i \neq 1$.  Next define the following elements of $\mathfrak{p}(i,
n) \wedge \mathfrak{p}(i, n)$:

\begin{align}
    \label{V0}
    V_0 &:=
            \displaystyle{
            \frac{2}{n}X\wedge I-2\sum_{
                \bgroup
                    \def\arraystretch{0.5}
                    \begin{array}{c}
                        \scriptscriptstyle{1\leq k<\ell\leq n}
                        \\
                        \scriptscriptstyle{c_k > c_\ell}
                    \end{array}
                \egroup}
            e_{k\ell}\wedge \eta_{\ell,k+i}
            -2\sum_{1\leq k < \ell < i}E_{k\ell}\wedge E_{\ell - k,i}},
        \\
    \label{W}
    W &:=
        \displaystyle{2\left(\sum_{1\leq\ell<i} d_\ell\wedge
        E_{\ell,\ell - 1} + \sum_{1\leq\ell<j<k\leq i} E_{j-1,k-1}\wedge
        E_{k-j+\ell,\ell - 1}\right)},
    \\
    V_{\ell} &:=
    \displaystyle{-2\binom{i}{\ell}\left(d_\ell \wedge E_{\ell i} +
        \hspace{-2mm}\sum_{\ell\leq j<k<i} \hspace{-2mm} E_{jk} \wedge E_{k - j
        + \ell,i} - \sum_{0\leq k<j<\ell}E_{jk} \wedge E_{k - j + \ell,i}
        \right)}
    \end{align}

\noindent for $\ell\in[1,i]$, where $d_\ell$ is the diagonal matrix in
$\mathfrak{sl}_n$ defined by $d_\ell : = \sum_{\ell\leq j < i}E_{jj} +
\varTheta \mathbf{I}_n$.  The scalar $\varTheta\in\mathbb{F}$ is determined by
the condition that $d_\ell$ is traceless.  The following proposition
characterizes the $\mathfrak{n}$-module $M$.

\begin{proposition}

    \label{module_actions}

    $ $

    \begin{enumerate}

        \item \label{Zr} $Z.r = W$,

        \item \label{Xr} $X.r = V_0$,

        \item \label{ZW} $Z.W = 0$,

        \item \label{ZV} $Z.V_\ell = (\ell + 1)V_{\ell + 1}$ for all
            $\ell\in[0,i]$,

        \item \label{XW} $X.W = \frac{1}{i}V_1$, and

        \item \label{XV} $X.V_\ell = 0$

    \end{enumerate}

    with the convention that $V_{i + 1} = 0$.

\end{proposition}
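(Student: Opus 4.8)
The six identities are all instances of the adjoint action on $\mathfrak{sl}_n \wedge \mathfrak{sl}_n$, so the plan is to reduce everything to the Leibniz rule $y.(a \wedge b) = [y,a]\wedge b + a \wedge [y,b]$ together with the elementary bracket $[e_{ab}, e_{cd}] = \delta_{bc}e_{ad} - \delta_{da}e_{cb}$, and then process each summand. Before touching $r$ I would first record, as a short lemma, the commutation relations used repeatedly: how $X = \sum_j c_j e_{j,j+i}$ and $Z = \sum_j a_j \xi_{j+1,j}$ bracket against the building blocks $e_{k\ell}$, $\eta_{k\ell}$, $\xi_{k\ell}$, and (after the rewriting preceding the proposition) $E_{k\ell}$ and $d_\ell$. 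Since $\xi$ and $\eta$ are sums along $i$-shifted diagonals, each such bracket collapses to another $\xi$/$\eta$/$e$ with shifted indices, the Kronecker deltas forcing congruences modulo $i$; the out-of-range convention $e_{jk} = 0$ is exactly what creates and truncates the boundary terms. With these relations in hand the six identities become bookkeeping rather than fresh computation.

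For parts (1) and (2) I would expand $r = \alpha_\rightarrow + \alpha_\leftarrow + \beta + \gamma$ and apply $Z.$ and $X.$ termwise. The forward part $\alpha_\rightarrow$ and the diagonal pieces $\beta,\gamma$ are where $X$ acts most transparently, while the $E_{k\ell}$-rewriting of $\alpha_\leftarrow$ and of $Z$ is tailored precisely so that $Z$ acts uniformly: its virtue is that it absorbs the case split $n \bmod i = 1$ versus $n \bmod i \neq 1$ (the sign $\epsilon$) into a single formula, so I compute only once. The target expressions $W$ and $V_0$ are then read off after telescoping the resulting shifted sums and checking that the diagonal contributions assemble into the stated $\frac{2}{n}X \wedge I$ and $d_\ell$ terms. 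Parts (3) and (5) are then direct computations of $Z.$ and $X.$ applied to the expression for $W$ obtained in part (1).

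For the family $\{V_\ell\}_{\ell\ge 1}$ I would exploit the module structure rather than recompute each term. Parts (2) and (4) together say precisely that $V_\ell = \frac{1}{\ell!}(\operatorname{ad}Z)^\ell.V_0$, so once part (2) is known the content of part (4) is that these iterated brackets reproduce the explicit formulas for $V_\ell$; the binomial coefficient $\binom{i}{\ell}$ is exactly the normalization producing the clean factor $(\ell+1)$, and the convention $V_{i+1}=0$ matches $(\operatorname{ad}Z)^iX = 0$ from Section \ref{subsection n}. For part (6) I would prove the stronger statement that the abelian subalgebra $\mathfrak{a} := \operatorname{span}(X, (\operatorname{ad}Z)X, \dots, (\operatorname{ad}Z)^{i-1}X)$ annihilates each $V_\ell$, by induction on $\ell$. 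The step is clean: for $y \in \mathfrak{a}$ one has $y.V_{\ell+1} = \frac{1}{\ell+1}\big(Z.(y.V_\ell) - ((\operatorname{ad}Z)y).V_\ell\big)$, and since $\mathfrak{a}$ is abelian and $(\operatorname{ad}Z)$ maps $\mathfrak{a}$ into itself, both terms vanish by hypothesis. The base case $\mathfrak{a}.V_0 = 0$, together with the anchor identities $X.V_0 = 0$ and $X.W = \frac1i V_1$, is what must be verified by hand.

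The main obstacle is not conceptual but the control of the shifted-index sums: keeping the reductions modulo $i$ and modulo $n$ consistent, handling the boundary terms created by the $e_{jk}=0$ convention, and confirming that the two sign/shift conventions packaged into $E_{k\ell}$ (the $n \equiv 1$ versus $n \equiv -1$ cases) genuinely collapse to the single uniform computation. The delicate cancellations live in parts (3), (5), and (6), where several families of shifted diagonals must telescope against each other and against the diagonal $d_\ell$-terms; verifying $Z.W = 0$ and $X.V_\ell = 0$ is where I expect essentially all of the work to concentrate.
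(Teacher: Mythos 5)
Your outline for parts (1)--(5) is essentially the paper's own proof: the paper first records precisely the commutation lemma you describe (Lemma \ref{lemma 1}, giving $[X,-]$ and $[Z,-]$ on $e_{k\ell}$, $\eta_{k\ell}$, $E_{k\ell}$, $d_\ell$), applies it termwise to $\alpha_{\rightarrow}+\alpha_{\leftarrow}+\beta+\gamma$ (Lemma \ref{lemma 2}) to get (1) and (2), and proves (3), (4) for $\ell\geq 1$, and (5) from the uniform $E$-calculus, exactly as you propose. Where you genuinely diverge is part (6). The paper computes $X.V_\ell$ directly, and for $\ell\geq 1$ this is nearly free, since $V_\ell$ is built only from $E_{jk}$'s and $d_\ell$, on which $X$ acts almost trivially by identities \ref{X_E} and \ref{X_d} ($[X,E_{k\ell}]=-\delta_{\ell 0}E_{ki}$, $[X,d_\ell]=0$). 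Your induction over the $\operatorname{ad}Z$-stable subalgebra $\mathfrak{a}$ is valid (the step identity is correct once part (4) is in hand, and in fact only $\operatorname{ad}Z$-stability, not abelianness, is used), but it buys less than it appears to: the base case $\mathfrak{a}.V_0=0$ is not just $X.V_0=0$ but also $E_{ki}.V_0=0$ for $k\in[1,i-1]$, and those brackets hit the $e_{k\ell}\wedge\eta_{\ell,k+i}$ sum inside $V_0$, requiring commutation identities beyond Lemma \ref{lemma 1} and work at least comparable to what the induction avoids. One correction of emphasis: you predict the delicate cancellations sit in (3), (5), (6), but the single case the paper must treat by a special argument is part (4) at $\ell=0$, i.e. $Z.V_0=V_1$, precisely because $V_0$ (unlike $W$ and the $V_\ell$ with $\ell\geq1$) is not expressible in the $E$-calculus; the paper splits $Z$ acting on the $e_{k\ell}\wedge\eta_{\ell,k+i}$ sum into three families $A_{k\ell}$, $B_{k\ell}$, $C_{k\ell}$, kills the $C$-terms via the out-of-range convention, and telescopes the rest into $(i-1)\sum_{1\leq k\leq i}E_{1k}\wedge E_{ki}$. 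Your plan does subsume this step (it is part of ``the iterated brackets reproduce the explicit formulas''), so this is a misjudgment of where the work concentrates rather than a gap, but if you execute the plan, essentially all of the hard computation lives there.
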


When $i > 1$, the set of vectors $\{r, W, V_0, V_1, \dots V_{i}\}$ is an
$\mathbb{F}$-basis of $M$.  However if $i = 1$, the vectors $W$ and $V_1$
vanish; in this case $\{r, V_0\}$ is an $\mathbb{F}$-basis of $M$.  Before
proving Proposition \ref{module_actions} we provide two lemmas, both
computational in nature.

\begin{lemma}

    \label{lemma 1}

    $ $

    \begin{enumerate}

    \item For $k,\ell\in[1,n]$,

        \begin{enumerate}

            \setlength{\itemsep}{5pt}

            \item \label{X_eta} $[X, \eta_{k\ell}] =
                \left(\floor*{\frac{n-k}{i}} -
                \floor*{\frac{n-\ell}{i}}\right)\eta_{k,\ell+i} +
                \left(\floor*{\frac{n-k}{i}} + 1\right)e_{k-i, \ell}$

            \item \label{X_e} $[X, e_{k\ell}] = \left(\floor*{\frac{n - k}{i}}
                + 1 \right) e_{k-i,\ell} - \floor*{\frac{n-\ell}{i}} e_{k,\ell
                + i}$

            \item \label{Z_eta} $[Z, \eta_{k\ell}] = a_k\eta_{k+1,\ell} -
                a_{\ell-1} \eta_{k,\ell - 1} + \delta_{(n - \ell)
                \text{ mod } i, i - 1} a_{\ell-1} e_{n + k - \ell +
                1, n}$

            \item \label{Z_e} $[Z, e_{k\ell}] = a_k e_{k + 1, \ell} -
                a_{\ell-1} e_{k,\ell-1}$

        \end{enumerate}

    \item For $k\in[1, i-1]$ and $\ell\in[0, i]$,

            \begin{enumerate}

                   \setlength{\itemsep}{5pt}

                   \item \label{Z_E} $[Z, E_{k\ell}] = (i - 1 - k)E_{k+1,\ell}
                       - (-\ell \text{ mod } i)E_{k,\ell-1}$,

                   \item \label{X_E} $[X, E_{k\ell}] = -\delta_{\ell 0}E_{ki}$,

            \end{enumerate}

    \item For $\ell\in[1,i]$,

        \begin{enumerate}

            \setlength{\itemsep}{5pt}

            \item \label{X_d} $[X, d_\ell] = 0$,

            \item \label{Z_d} $[Z, d_\ell] = -(i-\ell)E_{\ell, \ell - 1}$.

        \end{enumerate}

\end{enumerate}

\end{lemma}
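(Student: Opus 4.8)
The plan is to treat every identity as an explicit commutator computation in $\mathfrak{gl}_n$, built from the single matrix-unit relation $[e_{ab},e_{cd}]=\delta_{bc}e_{ad}-\delta_{da}e_{cb}$ together with two arithmetic facts: the floor values satisfy $c_{k+ip}=c_k-p$ (equivalently $c_{k-i}=c_k+1$), and the coefficients $a_k=\epsilon((-nk)\bmod i)$ are $i$-periodic, $a_{k+ip}=a_k$, vanishing exactly when $i\mid k$. I would first dispatch the two atomic identities, the formulas for $[X,e_{k\ell}]$ and $[Z,e_{k\ell}]$ in part (1). For the former, write $X=\sum_j c_j e_{j,j+i}$, expand $[X,e_{k\ell}]$ termwise, keep the two surviving Kronecker deltas ($j+i=k$ and $j=\ell$), and rewrite $c_{k-i}=c_k+1$. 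For the latter, first note that $Z$ collapses to the subdiagonal matrix $\sum_{a=1}^{n-1}a_a e_{a+1,a}$ (since the $\xi_{j+1,j}$ are subdiagonal in range and $a_a=0$ when $i\mid a$), then expand identically. The out-of-range convention $e_{jk}=0$ disposes of the edge cases automatically.

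Next I would obtain the $\eta$-identities by applying the atomic formulas to each summand of $\eta_{k\ell}=\sum_{p\geq0}e_{k+ip,\ell+ip}$ and summing over $p\geq0$. For $X$ this is a telescoping sum: after the reindexing forced by $c_{k+ip}=c_k-p$, the interior coefficients collapse to the constant $c_k-c_\ell$, producing the $\eta_{k,\ell+i}$ term, while the $p=0$ end leaves the single boundary term $(c_k+1)e_{k-i,\ell}$. For $Z$ the sum produces $a_k\eta_{k+1,\ell}-a_{\ell-1}\eta_{k,\ell-1}$, but here the bookkeeping is genuinely delicate: the naive sum assigns a bracket to the summand $e_{k+ip,\ell+ip}$ even when that unit is out of range (hence truly zero), over-counting by one term at the top boundary. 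The surviving mismatch is a unit at column $n$, present in $\eta_{k,\ell-1}$ but whose parent $e_{\cdot,n+1}$ in $\eta_{k\ell}$ is absent; this occurs exactly when $i\mid(n-\ell+1)$, which is the content of the factor $\delta_{(n-\ell)\bmod i,\,i-1}$, and contributes the correction $a_{\ell-1}e_{n+k-\ell+1,n}$. Making this boundary index explicit is the crux of part (1).

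For parts (2) and (3) I would route everything through $\xi$. First establish the clean bracket rules $[X,\xi_{ab}]=(c_a-c_b)\xi_{a,b+i}$ and $[Z,\xi_{ab}]=a_a\xi_{a+1,b}-a_{b-1}\xi_{a,b-1}$; because $\xi_{ab}$ runs over all $p\in\mathbb{Z}$, the $X$-telescoping is exact and no $p=0$ remnant appears. Substituting the definition of $E_{k\ell}$, namely $\xi_{k+1,\ell+1}$ when $n\bmod i=1$ and $-\xi_{i-\ell,i-k}$ otherwise, then gives the formulas for $[Z,E_{k\ell}]$ and $[X,E_{k\ell}]$, provided one translates the coefficients using the case-dependent form of $a_j$ (here $a_j=(-j)\bmod i$, there $a_j=-(j\bmod i)$) and checks that both residue cases yield the same stated formulas. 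The decisive point in $[X,E_{k\ell}]$ is that $c_{k+1}-c_{\ell+1}$ vanishes for every $\ell\in[1,i]$ and equals $-1$ only at $\ell=0$; this is where $n\equiv\pm1\pmod i$ enters, pinning the floor $c_j$ to a single value across the block $j\in[2,i]$. Parts (3) then follow by linearity from $d_\ell=\sum_{\ell\leq j<i}E_{jj}+\varTheta\mathbf{I}_n$: one gets $[X,d_\ell]=\sum[X,E_{jj}]=0$, while $\sum_{j=\ell}^{i-1}[Z,E_{jj}]$ telescopes to $-(i-\ell)E_{\ell,\ell-1}$.

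I expect the main obstacle to be controlling the range-boundary corrections uniformly. The identities are morally clean statements about telescoping and $i$-periodicity, but the convention $e_{jk}=0$ out of range means the formal term-by-term formulas can over- or under-count at the top ($n$) boundary, as already seen in the $\delta$-correction of part (1). The real work is twofold: pinning down exactly when a target unit's parent falls out of range, and verifying that for the restricted ranges $k\in[1,i-1]$, $\ell\in[0,i]$ in part (2) every candidate correction has its row index forced above $n$, so that, unlike the $\eta$ case, no analogous term survives and the $\xi$-formulas stay clean. I would handle this systematically by recording, for each identity, the exact set of surviving indices $p$ and matching it against the support of the target $\eta$ or $\xi$.
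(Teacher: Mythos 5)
Your proposal is correct, and for part (1) it coincides with the paper's own computation: the paper expands the double sums for $[X,\eta_{k\ell}]$ and $[Z,\eta_{k\ell}]$ directly, which is exactly your atomic-commutator-plus-summation argument, including the same over-counted boundary unit at column $n$ that produces the $\delta_{(n-\ell)\bmod i,\, i-1}$ correction. Where you genuinely diverge is in parts (2) and (3). The paper never passes through $\xi$-bracket rules: for $[Z,E_{k\ell}]$ and $[Z,d_\ell]$ it rewrites $Z=\sum_{1\leq j<i}(i-j)E_{j,j-1}$ and invokes the closed relation $[E_{k\ell},E_{rs}]=\delta_{\ell r}E_{ks}-\delta_{sk}E_{r\ell}$ for $k,r\in[1,i-1]$, $\ell,s\in[0,i]$, so the $E$'s behave like matrix units and both residue cases, together with all range-truncation issues, are absorbed once into those two facts. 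Your route via $[X,\xi_{ab}]=(c_a-c_b)\xi_{a,b+i}$ and $[Z,\xi_{ab}]=a_a\xi_{a+1,b}-a_{b-1}\xi_{a,b-1}$ plus the case-dependent translation also works: your coefficient dictionary is right in both residue cases, and the decisive vanishing $c_{k+1}-c_{\ell+1}=-\delta_{\ell 0}$ (which is indeed where $n\equiv\pm1 \pmod i$ enters) checks out. But note that these $\xi$-rules are not unconditional, exactly as you suspect: $[Z,\xi_{ab}]$ acquires a correction precisely when $i\mid(n+1-b)$, $a<b$, and $a_{b-1}\neq 0$, while $[X,\xi_{ab}]$ does whenever some shift $q$ has $a+iq\in[1,n]$ but $b+iq\in[1-i,0]$. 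In the part-(2) ranges all such candidates do die, though not always for the reason you state: the $X$-correction candidates are killed because their row index falls \emph{below} $1$ (e.g. $a-i\leq 0$), not above $n$; only the $Z$-candidates (which occur only when $n\equiv 1 \pmod i$ and $\ell=1$) have row index forced above $n$. The trade-off is clear: the paper's argument is shorter and case-free at the point of use, while yours is more elementary and makes the truncation phenomena explicit, at the cost of carrying the boundary bookkeeping through both residue cases.
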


\begin{proof}

    For convenience, we first recall the relevant definitions: $\xi_{k\ell} :=
    \sum_{p\in\mathbb{Z}} e_{k+ip, \ell+ip}$, $\eta_{k\ell} := \sum_{p\geq 0}
    e_{k+ip, \ell+ip}$, $X := \sum_{1\leq j\leq n - i}c_je_{j,j+i}$, $Z :=
    \sum_{1\leq j < i}a_j\xi_{j+1,j}$, where $c_j = \floor*{\frac{n-j}{i}}$,
    $a_j := \epsilon((-nj) \text{ mod }  i)$ (where $\epsilon = 1$ if $n$ mod
    $i = 1$ and $\epsilon = -1$ if $n$ mod $i \neq 1$). Recall also $E_{k\ell}
    := \xi_{k+1, \ell+1}$ if $n$ mod $i = 1$ and $E_{k\ell} := -\xi_{i-\ell,
    i-k}$ if $n$ mod $i \neq 1$. Finally, $d_\ell := \sum_{\ell\leq j <
    i}E_{jj} + \varTheta \mathbf{I}_n$, where the scalar
    $\varTheta\in\mathbb{F}$ is uniquely determined by the condition that
    $d_\ell$ is traceless.

    The identities \ref{X_eta} and \ref{Z_eta} can be proved directly from
    these definitions. For instance, we compute

    \begin{align*}
        \left[X, \eta_{k\ell}\right] &= \left[ \sum_{1\leq j \leq n-i}
        c_j e_{j, j + i}, \sum_{p\geq 0} e_{k + ip, \ell + ip}\right] \\
        &= \sum_{\bgroup\def\arraystretch{0.5}
                    \begin{array}{c}
                        \scriptscriptstyle{1\leq j\leq n - i} \\
                        \scriptscriptstyle{p\geq 0}
                    \end{array}
                \egroup}
        c_j\left(\delta_{j+i,k+ip} e_{j, \ell+ip} - \delta_{j,\ell + ip}
        e_{k+ip, j+i}\right)
        \\
        &= \sum_{p\geq -1} c_{k + ip} e_{k + ip, \ell + (p + 1)i} -\sum_{p\geq
        0} c_{\ell +ip} e_{k + ip, \ell + (p + 1)i}
        \\
        &= c_{k-i} e_{k-i, \ell} + \sum_{p\geq 0} \left( c_{k+ip} - c_{\ell
        +ip}\right) e_{k + ip, \ell +(p+1)i}
        \\
        &= \left(c_{k} + 1\right) e_{k-i, \ell} + \sum_{p\geq 0} \left( c_k -
        c_\ell\right) e_{k + ip, \ell +(p+1)i}
        \\
        &= \left(c_{k} + 1\right) e_{k-i, \ell} + \left( c_k -
        c_\ell\right)\eta_{k, \ell + i}
    \end{align*}

    and

    \begin{align*}
        \left[Z, \eta_{k\ell}\right] &= \sum_{1\leq j<i}
        a_j\left[\xi_{j+1,j},\eta_{k\ell}\right]
        \\
        &= \sum_{\bgroup\def\arraystretch{0.5}
                    \begin{array}{c}
                        \scriptscriptstyle{1\leq j < i} \\
                        \scriptscriptstyle{p,q\geq 0}
                    \end{array}
                \egroup}
                a_j\left[e_{j+1+pi,j+pi},e_{k+qi, \ell + qi}\right]
        \\
        &= \sum_{\bgroup\def\arraystretch{0.5}
                    \begin{array}{c}
                        \scriptscriptstyle{1\leq j < i} \\
                        \scriptscriptstyle{p,q\geq 0}
                    \end{array}
                \egroup}
                a_j \left(\delta_{j+pi,k+qi}e_{k+1+qi,\ell+qi} -
                \delta_{j+1+pi,\ell + qi}e_{k+qi, \ell - 1 + qi}\right)
        \\
        &= a_k\eta_{k+1,\ell} - a_{\ell - 1}\eta_{k, \ell-1} + \delta_{(n-\ell)
            \text{ mod } i, i - 1}a_{\ell - 1} e_{n + k - \ell + 1, n}.
    \end{align*}

    Similarly, one can use the definitions provided above to prove identities
    \ref{X_e}, \ref{Z_e}, \ref{X_E}, and \ref{X_d}. To prove identities
    \ref{Z_E} and \ref{Z_d}, one can use an alternative formula for $Z$, namely
    $Z = \sum_{1\leq j<i} (i - j)E_{j,j-1}$ together with the observation that
    $[E_{k\ell}, E_{rs}] = \delta_{\ell r} E_{ks} - \delta_{sk} E_{r\ell}$ when
    $k,r\in [1, i-1]$ and $\ell,s\in [0, i]$.

\end{proof}

Since $\alpha_{\rightarrow}$, $\alpha_{\leftarrow}$, $\beta$, and $\gamma$ are
composed of terms only involving $e_{k\ell}$, $\eta_{k\ell}$, $E_{k\ell}$, and
$d_\ell$ with subscripts $k$ and $\ell$ in the ranges given in Lemma \ref{lemma
1}, we can readily compute the adjoint actions of $X$ and $Z$ on these
constituent parts of $r$ to obtain the following lemma.

\begin{lemma}

    \label{lemma 2}

    $ $

    \begin{enumerate}

    \setlength{\itemsep}{10pt}

        \item \label{X_alp_forward}

            $X. \alpha_{\rightarrow} = \displaystyle{-2\hspace{-4mm}\sum_{
                \bgroup\def\arraystretch{0.5}
                    \begin{array}{c}
                        \scriptscriptstyle{1\leq k<\ell\leq n} \\
                        \scriptscriptstyle{c_k > c_\ell}
                    \end{array}
                \egroup}\hspace{-4mm}
                e_{k\ell}\wedge \eta_{\ell,k+i}+2\sum_{1\leq j\leq n-i}
                \left(\sum_{j < s \leq j + i} \floor*{\frac{n-j}{i}}e_{js}
                \wedge e_{s,j+i}\right)}$

        \item \label{X_alp_backward}

            $X. \alpha_{\leftarrow} = \displaystyle{-2\sum_{1\leq k < \ell < i}
            E_{k\ell} \wedge E_{\ell - k, i}}$

        \item \label{X_beta}

            $X. \beta = \displaystyle{\sum_{1\leq j\leq
            n}\floor*{\frac{n-j}{i}} e_{j,j+i}\wedge\left(\frac{2}{n}I - e_{jj}
            - e_{j+i,j+i}\right)}$

        \item \label{X_gamma}

            $X. \gamma = \displaystyle{\sum_{1\leq j\leq
            n}\floor*{\frac{n-j}{i}} \left(e_{j,j+i}\wedge
            \left(e_{jj}-e_{j+i,j+i}\right) - 2\sum_{j < s < j+i}e_{js} \wedge
            e_{s,j+i}\right)}$

        \item \label{Z_alp_forward}

            $Z. \alpha_{\rightarrow} = \displaystyle{2\sum_{1\leq j <
            n}a_j\eta_{j+i+1,j+i}\wedge h_j}$

        \item \label{Z_alp_backward}

            $Z. \alpha_{\leftarrow} = \displaystyle{2\left(\sum_{1\leq
            j<i}(i-j)E_{j,j-1} \wedge E_{jj}+ \hspace{-2mm}
            \sum_{1\leq\ell<j\leq k\leq i} \hspace{-2mm} E_{j-1,k-1}\wedge
            E_{k-j+\ell,\ell-1} \right)}$

        \item \label{Z_beta}

            $Z. \beta = \displaystyle{-2\hspace{-1mm} \sum_{1\leq
            j<i}\hspace{-2mm} (i-j) E_{j,j-1} \hspace{-1mm} \wedge
            \hspace{-1mm} \left(d_j - d_{j + 1}\right) - \hspace{-2mm}
            \sum_{1\leq\ell<n} \hspace{-2mm} a_\ell\left(\eta_{\ell + 1, \ell}
            + \eta_{\ell + i + 1, \ell + i}\right) \wedge h_\ell}$

        \item \label{Z_gamma}

            $Z. \gamma = \displaystyle{\sum_{1\leq j<n}a_je_{j + 1, j} \wedge
            h_j}$

    \end{enumerate}

\end{lemma}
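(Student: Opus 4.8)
The plan is to exploit the fact that the adjoint action of $\mathfrak{sl}_n$ on $\mathfrak{sl}_n\wedge\mathfrak{sl}_n$ acts by derivations, so that $x.(a\wedge b) = [x,a]\wedge b + a\wedge[x,b]$ for $x\in\{X,Z\}$. This reduces every one of the eight identities to substituting the single-factor bracket formulas of Lemma \ref{lemma 1} into each summand of the defining expressions for $\alpha_{\rightarrow}$, $\alpha_{\leftarrow}$, $\beta$, and $\gamma$, followed by reindexing and collecting like terms. I would organize the computation according to which constituent part is being differentiated. For the actions on $\alpha_{\rightarrow}$ (parts \ref{X_alp_forward} and \ref{Z_alp_forward}), whose summands are wedges $e_{k\ell}\wedge\eta_{\ell+i,k+i}$, I would apply identities \ref{X_e} and \ref{X_eta} for $X$, and \ref{Z_e} and \ref{Z_eta} for $Z$; the actions on $\gamma=\sum e_{k\ell}\wedge e_{\ell k}$ (parts \ref{X_gamma} and \ref{Z_gamma}) follow the same pattern using only \ref{X_e} and \ref{Z_e}. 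The actions on $\alpha_{\leftarrow}$ (parts \ref{X_alp_backward} and \ref{Z_alp_backward}), written in the $E_{k\ell}$-form, are handled by \ref{X_E} and \ref{Z_E}, with \ref{X_d} and \ref{Z_d} available for the terms involving $d_\ell$.

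The diagonal part $\beta$ requires more care. Although $\beta$ is written in terms of $e_{jj}\wedge e_{\ell\ell}$, the needed brackets $[X,e_{jj}]$ and $[Z,e_{jj}]$ are the diagonal specializations of \ref{X_e} and \ref{Z_e}. For $X.\beta$ (part \ref{X_beta}), substituting $[X,e_{jj}]=(c_j+1)e_{j-i,j}-c_je_{j,j+i}$ produces off-diagonal-wedge-diagonal terms, and the key to collapsing them into the stated form $\sum_j c_j\, e_{j,j+i}\wedge(\tfrac{2}{n}I - e_{jj}-e_{j+i,j+i})$ is the arithmetic of the coefficient $b_{j\ell}=-1+\tfrac{2}{n}((j-\ell)i^{-1}\bmod n)$ under the index shift $\ell\mapsto\ell+i$. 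Since $(j-\ell-i)i^{-1}\equiv (j-\ell)i^{-1}-1\pmod n$, shifting the second index by $i$ decreases the residue by $1$ except at the single wrap-around where it jumps by $n-1$; this wrap-around is precisely what generates the $\tfrac{2}{n}I$ term. The same residue bookkeeping, now combined with the subdiagonal brackets of \ref{Z_e}, drives $Z.\beta$ (part \ref{Z_beta}), where the resulting diagonal contributions must be repackaged into the $h_\ell$ and the differences $d_j-d_{j+1}$ appearing in the statement.

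The main obstacle is therefore not any individual bracket computation but the bookkeeping needed to reach the compact closed forms, and this is sharpest for the $Z$-actions. The term $\delta_{(n-\ell)\bmod i,\,i-1}\,a_{\ell-1}\,e_{n+k-\ell+1,n}$ appearing in \ref{Z_eta} contributes boundary pieces that do not cancel termwise and must be tracked separately when assembling $Z.\alpha_{\rightarrow}$ and $Z.\beta$; likewise the recombination of diagonal contributions into $h_\ell=e_{\ell\ell}-e_{\ell+1,\ell+1}$ relies on telescoping the floor-function coefficients $c_j=\floor*{\frac{n-j}{i}}$ and the residue coefficients across the full index range. I expect parts \ref{Z_alp_forward}, \ref{Z_beta}, and \ref{Z_gamma} to demand the most careful reindexing, since it is there that the $\eta$-valued and $h_\ell$-valued terms combine. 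Once the contributions of $X$ and $Z$ on each of the four constituents are assembled and the boundary terms accounted for, the eight stated identities follow.
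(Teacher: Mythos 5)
Your proposal follows essentially the same route as the paper's proof, which likewise reduces each of the eight identities to the bracket formulas of Lemma \ref{lemma 1} applied termwise via the derivation property of the adjoint action (the paper cites \ref{X_eta} and \ref{X_e} for part \ref{X_alp_forward}, \ref{X_E} for part \ref{X_alp_backward}, \ref{X_e} for parts \ref{X_beta} and \ref{X_gamma}, \ref{Z_eta} and \ref{Z_e} for part \ref{Z_alp_forward}, \ref{Z_E} for part \ref{Z_alp_backward}, and \ref{Z_e} for parts \ref{Z_beta} and \ref{Z_gamma}). Your additional discussion of the reindexing, wrap-around terms, and recombination into $h_\ell$ and $d_j - d_{j+1}$ correctly fills in the bookkeeping the paper leaves implicit.
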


\begin{proof}

    Part \ref{X_alp_forward} follows from identities \ref{X_eta} and \ref{X_e}
    of Lemma \ref{lemma 1}. Part 2 follows from \ref{X_E} of Lemma \ref{lemma
    1}. Parts 3 and 4 follow from \ref{X_e}. Part 5 follows from \ref{Z_eta}
    and \ref{Z_e}. Part 6 follows from \ref{Z_E}. Parts 7 and 8 follow from
    \ref{Z_e}.

\end{proof}

\begin{proof}[Proof of Proposition \ref{module_actions}]

    Part \ref{Zr} follows directly from the identities \ref{Z_alp_forward},
    \ref{Z_alp_backward}, \ref{Z_beta}, and \ref{Z_gamma} of Lemma \ref{lemma
    2}. Part \ref{Xr} follows directly from the identities \ref{X_alp_forward},
    \ref{X_alp_backward}, \ref{X_beta}, and \ref{X_gamma} of Lemma \ref{lemma
    2}. For parts \ref{ZW} and \ref{ZV} (in the case when $\ell\neq 0$) use the
    identities \ref{Z_E} and \ref{Z_d} of Lemma \ref{lemma 1}.  To prove part
    \ref{ZV} in the case when $\ell  = 0$, we use the identities \ref{Z_eta}
    and \ref{Z_e} of Lemma \ref{lemma 1} to directly obtain

    \[
        Z.\left(\sum_{
            \bgroup
                \def\arraystretch{0.5}
                \begin{array}{c}
                    \scriptscriptstyle{1\leq k<\ell\leq n}
                    \\
                    \scriptscriptstyle{c_k > c_\ell}
                \end{array}
            \egroup}
            e_{k\ell}\wedge\eta_{\ell,k+i}\right)
        =
            \sum_{
                \bgroup
                \def\arraystretch{0.5}
                \begin{array}{c}
                    \scriptscriptstyle{1\leq k<\ell\leq n}
                    \\
                    \scriptscriptstyle{c_k > c_\ell}
                \end{array}
            \egroup}
            A_{k\ell} + B_{k\ell} + C_{k\ell},
        \]

    \noindent where $A_{k\ell} = a_k e_{k+1,\ell}\wedge\eta_{\ell,k+i} -
    a_{k-1}e_{k\ell}\wedge\eta_{\ell,(k-1)+i}$, $B_{k\ell} = a_\ell
    e_{k\ell}\wedge\eta_{\ell+1,k+i} -
    a_{\ell-1}e_{k,\ell-1}\wedge\eta_{\ell,k+i}$, and $C_{k\ell} =
    \delta_{(n-k) \text{ mod }i,i-1}a_{k-1}e_{k\ell} \wedge
    e_{n+1+\ell-k-i,n}$.  We observe that the $C_{k\ell}$'s all equal $0$
    because the subscript $n+1+\ell-k-i$ appearing in the expression for
    $C_{k\ell}$ is greater than $n$ whenever $(n-k) \text{ mod } i = i
    - 1$ and $c_k > c_\ell$.  Next we shift the indices of summation so that
    the terms involved in the $A_{k\ell}$'s and $B_{k\ell}$'s combine together
    into a single sum.  This gives us

    \begin{align*}
            Z. \left(\sum_{
                \hspace{-1mm}
                \bgroup
                    \def\arraystretch{0.5}
                    \begin{array}{c}
                        \scriptscriptstyle{1\leq k<\ell\leq n}
                        \\
                        \scriptscriptstyle{c_k > c_\ell}
                    \end{array}
                \egroup}
                \hspace{-4mm}
                e_{k\ell}\wedge\eta_{\ell,k+i}\right)
            &=
                a_n \left(
                \hspace{-1mm}
                \sum_{
                    \bgroup
                    \def\arraystretch{0.5}
                    \begin{array}{c}
                        \scriptscriptstyle{1\leq k<\ell\leq n}
                        \\
                        \scriptscriptstyle{c_k > c_{k+1} = c_\ell}
                    \end{array}
                \egroup}
                \hspace{-4mm}
                e_{k+1,\ell}\wedge\eta_{\ell,k+i} -
                \hspace{-4mm}
                \sum_{
                    \bgroup
                    \def\arraystretch{0.5}
                    \begin{array}{c}
                        \scriptscriptstyle{1\leq k\leq\ell\leq n}
                        \\
                        \scriptscriptstyle{c_k = c_\ell > c_{\ell+1}}
                    \end{array}
                \egroup}
                \hspace{-4mm}
                e_{k\ell}\wedge\eta_{\ell+1,k+i} \right)
            \\
            &=
                \epsilon a_n \sum_{1\leq k\leq i} E_{1k}\wedge E_{ki} =
                (i - 1) \sum_{1\leq k\leq i} E_{1k}\wedge E_{ki},
        \end{align*}

        \noindent Finally apply identity \ref{Z_E} of Lemma \ref{lemma 1} and
        it becomes a straightforward computation to verify that $Z. V_0 = V_1$.
        To prove parts \ref{XW} and \ref{XV}, use identities \ref{X_eta},
        \ref{X_e}, \ref{X_E}, and \ref{X_d} of Lemma \ref{lemma 1}.

\end{proof}

\subsection{The Principal Element \texorpdfstring{$\mathbf{H}$}{H}}
\label{subsection H}

Next we let $H\in\mathfrak{p}(i, n)$ denote the principal element corresponding
to the subprime functional (of Example \ref{principal_elt_subprime}) and let
$\alpha_0$ be the part of $\alpha$ obtained by adding the edge ${\mathcal
T}_{i,n}(n - (n \text{ mod } i)) =  i - (n \text{ mod } i)$ into the BD-triple
${\mathcal T}_{i, n}$.  Thus $\alpha = \alpha^\prime + \alpha_0$, where
$\alpha^\prime$ is the $\alpha$-part associated to the ``smaller'' BD-triple
identical to ${\mathcal T}_{i, n}$ except with the edge ${\mathcal T}_{i, n}(n
- (n \text{ mod } i)) = i - (n \text{ mod } i)$ removed. In fact, we observe
that $V_i = -\alpha_0$.  Put

\begin{equation}
    \label{definition of rprime}
    r^\prime := r - \alpha_0 = r + V_i \in M.
\end{equation}

\noindent Notice that $r^\prime$ is a solution to the MCYBE.

\begin{proposition}
    \label{H_M}

    The $\mathfrak{n}$-module $M$ decomposes into $H$-eigenspaces $M = \oplus
    M_\lambda$, where $M_\lambda = \{v\in M \mid H.v = \lambda v\}$. The
    nonzero eigenspaces $M_\lambda$ occur only if $\lambda\in[0, i + 1]$. Let
    $r^\prime$ be as given above. We have

    \begin{enumerate}

        \item $M_0$ is spanned by $r^\prime$,

        \item $M_1$ is spanned by $V_0$ and $W$,

        \item for $\lambda\in[2,i+1]$, $M_\lambda$ is spanned by
            $V_{\lambda - 1}$.

    \end{enumerate}

\end{proposition}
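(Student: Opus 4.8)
The cleanest route is to compute the $H$-action on each of the spanning vectors $r'$, $V_0$, $W$, $V_1, \dots, V_i$ directly and read off the eigenvalues. Since Proposition \ref{module_actions} already tells us that $\{r, W, V_0, V_1, \dots, V_i\}$ (equivalently $\{r', W, V_0, \dots, V_i\}$, since $r' = r + V_i$) is a basis of $M$, it suffices to verify that each of these basis vectors is an $H$-eigenvector and to identify its eigenvalue. The key input is the explicit diagonal form of $H$ from Example \ref{principal_elt_subprime}: $H = \operatorname{diag}(0,1,\dots,i-1,-1,0,\dots) + \Theta \mathbf{I}_n$, from which one computes the bracket $[H, e_{k\ell}]$ as $(H_{kk} - H_{\ell\ell})e_{k\ell}$, where $H_{kk}$ denotes the $(k,k)$-entry of the non-scalar part (the scalar $\Theta\mathbf I_n$ contributes nothing to any bracket). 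The crucial combinatorial fact is that the diagonal entry pattern $H_{kk} = (k-1) \bmod i$ (up to the labeling conventions) is designed precisely so that the operators $X = \sum c_j e_{j,j+i}$ and $Z = \sum a_j \xi_{j+1,j}$ each shift the $H$-weight by a fixed amount: $[H,X] = X$ and $[H,Z] = -Z$, i.e. $X$ raises $H$-weight by $1$ and $Z$ lowers it by $1$. These two weight-shift identities are the heart of the argument and should be established first.

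First I would verify $[H,X] = X$ and $[H,Z] = -Z$ from the explicit diagonal form of $H$; this is a short computation using $H_{j+i,j+i} - H_{j,j} = 1$ and $H_{j,j} - H_{j+1,j+1} = -1$ on the relevant index ranges (with care at the wraparound indices, which is where the $\epsilon$ and the modular reductions enter). Given these, everything else is bookkeeping: since $\mathfrak n$ is generated by $X$ and $Z$, and $H$ acts as a derivation on the module $M \subseteq \mathfrak{sl}_n \wedge \mathfrak{sl}_n$ via the (diagonal) Leibniz action, the operator $\operatorname{ad} H$ gives $M$ the structure of a graded $\mathfrak n$-module in which $X$ has degree $+1$ and $Z$ has degree $-1$. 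I would then pin down the weight of one base vector and propagate. The natural anchor is $r'$: because $r' \in \mathcal M$ is a genuine BD $r$-matrix of the smaller triple and $H$ is built from the principal element of the subprime functional, one expects $H.r' = 0$, which I would confirm by checking $H.\alpha' = H.\beta = H.\gamma = 0$ term-by-term (the diagonal weights cancel in each $e_{k\ell}\wedge e_{\ell k}$ pair and in $\beta \in \mathfrak h \wedge \mathfrak h$). This establishes (1).

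Having fixed $H.r' = 0$, the weight-shift identities combined with the module relations of Proposition \ref{module_actions} then force all remaining eigenvalues. Concretely: $Z.r = W$ with $Z$ of $H$-degree $-1$, and $X.r = V_0$ with $X$ of degree $+1$; comparing against $H.r = H.r' - H.V_i$ and the already-determined weight of $V_i$ pins the weights of $W$ and $V_0$ at $1$, giving (2). Then $Z.V_\ell = (\ell+1)V_{\ell+1}$ means each application of $Z$ lowers the index but, since we have reindexed, the relation $X.W = \tfrac1i V_1$ and $Z.V_0 = V_1$ anchor $V_1$ at weight $2$, and the chain $V_\ell \mapsto V_{\ell+1}$ under $Z$ together with $[H,Z]=-Z$ shows $V_\ell$ has weight $\ell+1$, yielding (3) and the range $\lambda \in [0,i+1]$. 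The only genuinely delicate point — the step I expect to be the main obstacle — is confirming the two weight-shift identities $[H,X]=X$, $[H,Z]=-Z$ at the wraparound indices, where the diagonal entries of $H$ jump by $-i$ rather than increment by $1$; here one must check that the coefficients $c_j$ and $a_j$ vanish exactly at the indices where the naive weight-shift would fail, so that no offending terms survive. This is precisely the arithmetic that requires $n \equiv \pm 1 \pmod i$ (equivalently, that the subprime functional be Frobenius so that $H$ exists as stated), which ties the proposition to the standing hypothesis.
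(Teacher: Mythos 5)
Your overall strategy --- establish weight-shift identities for $X$ and $Z$, anchor one eigenvector, and propagate through the relations of Proposition \ref{module_actions} --- is essentially the paper's own approach, but your key identity for $Z$ has the wrong sign, and the error is fatal. The correct identity is $[H,Z]=Z$, not $-Z$ (this is exactly what the paper's proof asserts). Writing the diagonal of $H$ from Example \ref{principal_elt_subprime} as $H_{pi+s}=(s-1)-p+\Theta$ for $1\le s\le i$ (note this is \emph{not} $(k-1)\bmod i$ as you state; the block decrement $-p$ is precisely what makes $[H,X]=X$ true, since under your pattern $[H,X]$ would vanish), every matrix unit $e_{j+1+ip,\,j+ip}$ occurring in $Z$ has both indices in the \emph{same} block of length $i$ (because $1\le j<i$), so its weight is $H_{j+1+ip}-H_{j+ip}=+1$. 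The same conclusion follows abstractly from the defining property $f([H,x])=f(x)$ of the principal element: it forces $\operatorname{ad}H$-eigenvalue $+1$ on every root vector in the support of the subprime functional, and the $e_{j+1,j}$-part of that support is exactly where $Z$ lives. Both $X$ and $Z$ raise $H$-weight by one; there is no lowering operator in this picture.

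With your sign the propagation contradicts the very statement being proved. From your own anchor $H.r'=0$, and using $W=Z.r=Z.(r'-V_i)=Z.r'$ (since $Z.V_i=(i+1)V_{i+1}=0$), you would get $H.W=[H,Z].r'+Z.(H.r')=-W$, placing $W$ in $M_{-1}$ rather than $M_1$; likewise the chain $Z.V_\ell=(\ell+1)V_{\ell+1}$ would make the weights \emph{decrease} in $\ell$, not satisfy $H.V_\ell=(\ell+1)V_\ell$. Indeed your write-up is internally inconsistent: you assert $[H,Z]=-Z$ and simultaneously that this chain gives $V_\ell$ weight $\ell+1$; both cannot hold. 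Two smaller points. First, the ``delicate wraparound'' you single out as the main obstacle is a non-issue: the identities $[H,X]=X$ and $[H,Z]=Z$ hold termwise, with no need for any coefficients $c_j$, $a_j$ to vanish, because $Z$'s support never crosses a block boundary and $X$ connects corresponding positions of adjacent blocks. Second, the paper anchors not at $r'$ but at $r$ itself, computing $H.r=H.\alpha_{\leftarrow}=-(i+1)V_i$ via the identity $[H,E_{k\ell}]=(k-\ell+\delta_{\ell i}(i+1))E_{k\ell}$ (with $\beta$, $\gamma$, $\alpha_{\rightarrow}$ killed by $H$), and then deduces $H.r'=H.r+H.V_i=0$; your proposed direct verification of $H.r'=0$ would still require essentially this computation on the $\alpha$-part, so it is not a shortcut.
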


\begin{proof}

    First, compute to obtain the identity $[H, E_{k\ell}] = (k - \ell +
    \delta_{\ell i}(i + 1))E_{k\ell}$ for all $k\in[1,i-1]$ and
    $\ell\in[0,i]$.  We also have the identities $[H, X] = X$ and $[H,
    Z] = Z$.  Since the $\beta$- and $\gamma$-parts of $r$ are annihilated by
    the adjoint action of $H$ (in fact, $\beta$ and $\gamma$ are annihilated by
    the adjoint action of any diagonal matrix) and additionally
    $H.\alpha_{\rightarrow} = 0$, this implies $H.r = H.\alpha_{\leftarrow}$.
    As $\alpha_{\leftarrow}$ can be written in terms of the $E_{k\ell}$'s, we
    obtain

    \begin{align*}
        H.r &= H.\alpha_{\leftarrow}
        = 2\sum_{1\leq \ell \leq j <k \leq i}\delta_{ki}(i+1)
        E_{jk}\wedge E_{k - j + \ell - 1, \ell - 1}
        \\
        &= 2\sum_{1\leq \ell \leq j <i}(i+1)
        E_{ji}\wedge E_{i - j + \ell - 1, \ell - 1}
        = -(i+1)V_i
    \end{align*}

    \noindent Now that we have established how $H$ acts on $X$, $Z$, and
    $r$, Proposition \ref{module_actions} provides a way to compute the
    adjoint action of $H$ on the $V_j$'s and $W$. We obtain $H.V_j = (j+1)V_j$
    (for $j \in [0, i]$) and $H.W = W$. Therefore $H.r^\prime =
    H(r + V_i) = H.r + H.V_i = 0$.

\end{proof}

\subsection{The map \texorpdfstring{$\mathbf{\Phi: \mathbb{F}^\times \to
SL_n}$}{Phi:$F^x$ --> SL(n)}}

\label{subsection phi}

Define the matrix

\begin{equation}
    \label{g_definition}
    g := e^{-Z}e^{-X}exp\left(-\sum_{1\leq k < i} \binom{i}{k}
    E_{ki}\right) \in SL_n.
\end{equation}

The matrix $g\in SL_n$ is obtained by seeking a matrix of the form $e^{\bf X}$
with ${\bf X}\in\mathfrak{n}$ so that $e^{\bf X}.r\in M_0\oplus M_1$. These
conditions significantly restrict plausible candidates for $g$. They, in fact,
uniquely determine $g$ up to a single non-zero free parameter in
$\mathbb{F}^\times$.  After obtaining such a matrix $g$ ($=e^{\bf X}$), we
observe that it can be factored into the product form as we have defined it
above. The factored form of $g$ highlights the roles of the matrices $X$ and
$Z$ and allows us to easily prove the following proposition.

\begin{proposition}

    \label{g has integer entries}

    All entries in the matrix $g$ are integers.

\end{proposition}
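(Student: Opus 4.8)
The plan is to exploit the factored form of $g$ directly: since $g = e^{-Z}e^{-X}\exp(-N)$ with $N := \sum_{1\le k<i}\binom{i}{k}E_{ki}$, and since a product of matrices with integer entries again has integer entries, it suffices to show that each of the three factors $e^{-X}$, $e^{-Z}$, and $\exp(-N)$ is an integer matrix. Each of $X$, $Z$, $N$ is nilpotent with integer entries, so its exponential is a \emph{finite} sum $\sum_m \frac{(-1)^m}{m!}(\cdot)^m$ whose entries are a priori only rational; the entire content of the proposition is that the factorials in the denominators cancel. I expect the combinatorial heart of the argument, and the main obstacle, to be identifying in each factor a falling- or rising-factorial pattern among the entries of the powers, so that division by $m!$ returns a binomial coefficient.

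For the factor $e^{-X}$, I would first record that $X=\sum_{1\le j\le n-i}c_j e_{j,j+i}$ is supported on the $i$-th superdiagonal, so that $X^m$ is supported on the $mi$-th superdiagonal with $(j,j+mi)$-entry equal to the product $c_j c_{j+i}\cdots c_{j+(m-1)i}$. The key observation is that $c_{j+i}=\lfloor\tfrac{n-j-i}{i}\rfloor=c_j-1$, so this product is the falling factorial $c_j(c_j-1)\cdots(c_j-m+1)=m!\binom{c_j}{m}$. Hence $(e^{-X})_{j,j+mi}=(-1)^m\binom{c_j}{m}\in\mathbb{Z}$, while all remaining entries are either $0$ or, on the diagonal, $1$; thus $e^{-X}$ has integer entries.

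For $e^{-Z}$ I would pass to the $E$-coordinates, in which $Z=\sum_{1\le j<i}(i-j)E_{j,j-1}$. Using the multiplication rule $E_{k\ell}E_{rs}=\delta_{\ell r}E_{ks}$ (valid for indices in range, and verifiable from the definition of the $\xi_{k\ell}$ exactly as the bracket identity was verified in Lemma \ref{lemma 1}), the $E_{a,a-m}$-coefficient of $Z^m$ is the product of $m$ consecutive integers $(i-a)(i-a+1)\cdots(i-a+m-1)$, which is again divisible by $m!$ with binomial quotient. Since $Z$ is strictly lower triangular in the $E$-index, $e^{-Z}=I+\sum_{m\ge1}\frac{(-Z)^m}{m!}$ is an integer combination of the matrices $E_{a,a-m}$, each of which (being $\pm\xi_{k\ell}$ for suitable $k,\ell$) has entries in $\{0,\pm1\}$; hence $e^{-Z}$ has integer entries. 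Finally, for $N=\sum_{1\le k<i}\binom{i}{k}E_{ki}$ the same multiplication rule gives $E_{ki}E_{k'i}=\delta_{ik'}E_{ki}=0$ because $k'<i$, so $N^2=0$ and $\exp(-N)=I-N$ is manifestly an integer matrix. Multiplying the three integer factors then yields the claim.
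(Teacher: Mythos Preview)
Your argument is exactly the paper's: factor $g$, then show each of $e^{-X}$, $e^{-Z}$, and $\exp(-N)$ has integer entries by observing that the nonzero entries of $X^m$ and $Z^m$ are products of $m$ consecutive integers and that $N^2=0$. Your write-up is in fact more detailed than the paper's, which simply asserts the consecutive-integer property without the explicit falling-factorial computation.

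One small slip worth fixing: the product rule $E_{k\ell}E_{rs}=\delta_{\ell r}E_{ks}$ is valid only in the case $n\equiv 1\pmod i$, where $E_{k\ell}=\xi_{k+1,\ell+1}$. In the case $n\equiv -1\pmod i$ one has $E_{k\ell}=-\xi_{i-\ell,i-k}$, and a direct check gives instead $E_{k\ell}E_{rs}=-\delta_{sk}E_{r\ell}$ (the two rules of course agree on commutators, which is all Lemma~\ref{lemma 1} needs). This does not damage your conclusion: with the corrected rule the $E_{a,a-m}$-coefficient of $Z^m$ becomes $(-1)^{m-1}(i-a)(i-a+1)\cdots(i-a+m-1)$, still $m!$ times an integer, and your verification that $N^2=0$ goes through verbatim in both cases. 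If you prefer to avoid the case split entirely, compute $Z^m$ directly from the original definition $Z=\sum_{1\le j<i}a_j\,\xi_{j+1,j}$ together with $\xi_{ab}\xi_{cd}=\delta_{b\equiv c\,(\mathrm{mod}\,i)}\,\xi_{a,\,d+b-c}$; the nonzero entries are then $a_j a_{j-1}\cdots a_{j-m+1}$, and since $a_j\in\{i-j,\,-j\}$ this is again $\pm$ a product of $m$ consecutive integers.
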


\begin{proof}

    From the definitions of the matrices $X$ and $Z$, it is easy to see that
    for every $k >0$, all entries in $X^k$ and $Z^k$ are products of $k$
    consecutive integers. Hence the matrices $X^k / k!$ and $Z^k / k!$  have
    integer entries. Therefore $e^X$ and $e^Z$ have integer entries.  Next, we
    note that $exp\left(-\sum_{1\leq k <i} \binom{i}{k} E_{ki}\right) = I -
    \sum_{1\leq k <i} \binom{i}{k} E_{ki}$. Thus, the matrix
    $exp\left(-\sum_{1\leq k <i} \binom{i}{k} E_{ki}\right)$ has integer
    entries. Since $g$ is written as the product of three matrices, each of
    them having integer entries, $g$ also has integer entries.

\end{proof}

We observe that multiplying the principal element $H$ by $2n$ gives us a
diagonal matrix having \textit{integer} entries. This implies that $t^{2nH}$ is
a diagonal matrix having \textit{polynomial} entries in $t^{\pm 1}$.  In fact,
$t^{2nH} = \operatorname{diag}(t^{2nH_1}, t^{2nH_2},\dots, t^{2nH_n})$ whenever
$H = \operatorname{diag}(H_1, H_2,\dots, H_n)$.  By rescaling we avoid having
to consider whether or not roots of $t$ exist. For instance $t^{1/j}$ may not
exist for every $t\in\mathbb{F}$ if $\mathbb{F}$ is not algebraically closed.
In short, rescaling is the reason why we do not need to require that
$\mathbb{F}$ be algebraically closed.

Finally, define the map $\Phi: \mathbb{F}^\times\to SL_n$ by $\Phi(t) :=
t^{2nH}g$.  Since $t^{2nH}$ is a diagonal matrix with nonzero entries of the
form $t^m$ with $m$ an integer, Proposition \ref{g has integer entries} implies
that all entries in $\Phi(t)$ have the form $ct^p$ with $c$ and $p$ integers.

\section{Main Result}

\label{main result section}

We are now in a position to prove the main result. Define

\begin{equation}
    \label{definition of b}
    b := -(X + Z)r = -V_0 - W \in \mathfrak{p}(i,n) \wedge \mathfrak{p}(i, n).
\end{equation}

\begin{theorem}

    \label{main_result}

    Suppose $n\equiv \pm 1$ (mod $i$) and let $r = r_{CG}(i, n)$ be the
    corresponding Cremmer-Gervais $r$-matrix of type $(i, n)$.  Let $r^\prime$,
    $b$, and $\Phi(t)$ be as defined above, then

    \begin{enumerate}

        \item $\Phi(t).r = r^\prime + t^{2n}\cdot b$.

        \item $b$ is a boundary solution to the CYBE having carrier
            $\mathfrak{p}(i, n)$

        \item $b$ lies in the closure of the component of ${\mathcal M}$
            containing the Cremmer-Gervais $r$-matrix $r$

    \end{enumerate}

\end{theorem}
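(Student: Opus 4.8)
The plan is to prove the three parts in the order (1), (3), (2), since parts (2) and (3) both draw on the explicit formula established in (1). For part (1), I would compute $\Phi(t).r = t^{2nH}\,(g.r)$ directly, exploiting the factored form of $g$ in \eqref{g_definition}. The key observation is that $g.r$ can be evaluated using the module structure of $M$ from Proposition \ref{module_actions}. Writing $g = e^{-Z}e^{-X}\exp(-\sum_{1\le k<i}\binom{i}{k}E_{ki})$ and applying the three factors to $r$ in succession, each application is a finite sum because $\mathfrak{n}$ is nilpotent and $M$ is finite-dimensional. Using the actions $X.r=V_0$, $Z.r=W$, $Z.W=0$, $X.W=\tfrac{1}{i}V_1$, $Z.V_\ell=(\ell+1)V_{\ell+1}$, and $X.V_\ell=0$, together with the relation $r' = r+V_i$ from \eqref{definition of rprime}, the telescoping combinatorics should collapse to $g.r = r' + b$, where $b=-V_0-W$ as in \eqref{definition of b}. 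I expect the precise bookkeeping — verifying that the binomial coefficients $\binom{i}{k}$ in the third factor are exactly what is needed to cancel the higher $V_\ell$ terms against the $V_1,\dots,V_i$ produced by $e^{-Z}$ — to be the main computational obstacle of part (1).

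Once $g.r = r'+b$ is established, applying $t^{2nH}$ finishes part (1): by Proposition \ref{H_M}, $r'\in M_0$ so $H.r'=0$ and hence $t^{2nH}.r' = r'$, while $b=-V_0-W$ lies in the eigenspace $M_1$, so $t^{2nH}.b = t^{2n}b$. This yields exactly $\Phi(t).r = r' + t^{2n}b$. I would emphasize that the eigenspace decomposition from Proposition \ref{H_M} is precisely what makes the $t$-dependence so clean, so this step is essentially immediate given the earlier work.

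For part (3), I would argue that $\Phi(t).r$ is a one-parameter family inside the $SL_n$-orbit of $r$, hence lies in the component of ${\mathcal M}$ containing $r$ for every $t\in\mathbb{F}^\times$. Since $\Phi(t)$ has entries that are Laurent polynomials in $t$ (Proposition \ref{g has integer entries} together with the form of $t^{2nH}$), the expression $r'+t^{2n}b$ is a polynomial curve in $\mathbb{F}[t]\otimes(\mathfrak{sl}_n\wedge\mathfrak{sl}_n)$ lying in ${\mathcal M}$ for all nonzero $t$; taking the Zariski closure places both the leading coefficient $b$ and the constant term $r'$ in the closure of that component. This is the standard degeneration argument, and the rescaling discussion preceding the definition of $\Phi$ ensures we never need roots of $t$, so no algebraic-closure hypothesis is required.

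For part (2), the boundary assertion follows from Theorem \ref{constructing boundary solutions, 1}: writing $r_t := \Phi(t).r = r' + t^{2n}b$, the adjoint action of $\Phi(t)\in SL_n$ preserves the Schouten bracket, so $\langle r_t,r_t\rangle = \langle r,r\rangle$, and after a change of parameter $s=t^{2n}$ the family $r' + sb$ satisfies the hypotheses of that theorem, identifying the top-degree coefficient $b$ as a boundary solution to the CYBE. It remains to identify the carrier of $b$ as $\mathfrak{p}(i,n)$. For this I would use the explicit formula $b=-V_0-W$ together with the defining expressions \eqref{V0} and \eqref{W}: the carrier is the span of $\{(\xi\otimes 1)b\mid \xi\in\mathfrak{sl}_n^*\}$, and I would check that these vectors span exactly $\mathfrak{p}(i,n)$ by exhibiting enough independent root-vector and Cartan contributions among the terms of $V_0$ and $W$ while verifying that no term leaves $\mathfrak{p}(i,n)$. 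Establishing that the carrier is all of $\mathfrak{p}(i,n)$ — rather than a proper subalgebra — is where the Frobenius property of the subprime functional in the subprime cases enters, and I expect verifying the exact dimension count to be the subtlest point of part (2).
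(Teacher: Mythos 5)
Your proposal is correct and follows essentially the same route as the paper: compute $g.r = r' + b$ factor-by-factor via the module actions of Proposition \ref{module_actions}, apply $t^{2nH}$ using the eigenspace decomposition of Proposition \ref{H_M}, deduce part (3) from part (1) by degeneration, and get part (2) from Theorem \ref{constructing boundary solutions, 1} together with reading the carrier off the formulas for $V_0$ and $W$. The only detail you leave implicit is the identity $(\operatorname{ad}Z)^k X = \frac{(i-1)!}{(i-k-1)!}E_{ki}$, which is exactly what lets the third factor $\exp\bigl(-\sum_{1\le k<i}\binom{i}{k}E_{ki}\bigr)$ act on $r$ through the $X$- and $Z$-actions and makes your ``telescoping'' bookkeeping collapse as expected.
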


\begin{proof}

    To prove part 1, observe first that for $k\in[1,i-1]$ we have the
    identity $(\operatorname{ad}Z)^kX = \frac{(i-1)!}{(i-k-1)!}E_{ki}$. Thus we
    can use the identities in Proposition \ref{module_actions} to compute the
    adjoint action of $exp(-\sum_{1\leq k < i} \binom{i}{k} E_{ki})$ on $r$.
    We compute

    \begin{align*}
        g.r
        &=
        e^{-Z}e^{-X}.\left(r - \sum_{1\leq k < i}V_k\right)
        =
        e^{-Z}.\left(r - \sum_{0\leq k < i} V_k\right)
        \\
        &=
        r - W - V_0 + V_i = r^\prime + b
    \end{align*}

    \noindent  Since $t^{2nH}v = t^{2n\lambda}v$ for every $H$-eigenvector
    $v\in M_\lambda$ and we have, in particular, that $r^\prime \in M_0$ and
    $b\in M_1 $, it follows that $t^{2nH}g.r = t^{2nH} (r^\prime + b) =
    r^\prime + t^{2n}b$. Part 3 follows as a direct consequence of part 1.  For
    part 2 we note that $\langle r, r\rangle = \langle r^\prime, r^\prime
    \rangle$, therefore Theorem \ref{constructing boundary solutions, 1}
    implies that $b$ is a boundary solution to the CYBE. The definitions of
    $V_0$ and $W$ show that the carrier of $b$ is $\mathfrak{p}(i, n)$.

\end{proof}

Part 2 of Theorem \ref{main_result} can be slightly generalized. In fact, for
all scalars $\rho,\mu\in \mathbb{F}$, $b_{\rho,\mu} := (\rho X+\mu Z).r$ is a
boundary solution to the CYBE. Furthermore if $\rho$ and $\mu$ are both
nonzero, the carrier of $b_{\rho,\mu}$ is the maximal parabolic subalgebra
$\mathfrak{p}(i, n)\subseteq \mathfrak{sl}_n$ and $b_{\rho,\mu}$ admits the
Frobenius functional

\begin{equation}
    f_{\rho, \mu} := -\rho^{-1}\left(\sum_{i<j\leq n}e_{j-i,j}^*\right)
    +\epsilon\mu^{-1} \left(\sum_{1\leq j <
    i}e_{j+1,j}^*\right)\in\mathfrak{p}(i, n)^*.
\end{equation}

\noindent The principal element corresponding to $f_{\rho, \mu}$ is equal to
$H$ (from Example \ref{principal_elt_subprime}), independent of the scalars
$\rho$ and $\mu$. Notice that when the scalars are specialized to $\rho = -1$
and $\mu = \epsilon$, the functional $f_{\rho, \mu}$ reduces to the subprime
functional.

\section{Closing Remarks}

\label{closing}

The results in the previous sections prove all cases of the
Gerstenhaber-Giaquinto boundary conjecture when $i = 1,2,3,$ and $4$. The next
smallest case to consider is when $i=5$ and $n=7$. However, this particular
case is handled by applying the non-trivial Dynkin diagram automorphism of
$\mathfrak{sl}_7$ to the $(2, 7)$-case. Similarly, the case when $i=5$ and
$n=8$ is handled by applying Dynkin diagram automorphism to the $(3, 8)$-case.
After these cases, the next smallest case to consider is when $i=5$ and $n=12$.

Since the subprime functional $f\in \mathfrak{p}(i, n)^*$ is Frobenius if and
only if $n\equiv \pm 1$ (mod $i$) it seems unlikely that $f$ will play as
important a role, if any, in proving the Gerstenhaber-Giaquinto boundary
conjecture in cases when $n\not\equiv\pm 1$ (mod $i$).  The subprime functional
is, first of all, an example of a \textit{small} functional. This means that if
we write the subprime functional as $f = \sum_{(j,k)\in E}e_{jk}^*$, with $E =
\{(1, i+1), (2,i+2), \dots, (n-i,n), (2,1), (3,2), \dots, (i+1, i)\}$, the
corresponding directed graph having vertices $1,\dots,n$ and edges $j\to k$ for
each $(j,k)\in E$ has an underlying undirected graph a tree. In fact any
functional $f$ of the form $\sum_{(j,k)\in S}e_{jk}^*$, where $S$ is the
\textit{edge set}, with corresponding graph a tree is called a \textit{small}
functional (see e.g.  \cite[Section 3]{GG2}).  The edge set $S$ is the
\textit{support} of $f$. The smallest possible size for the support of a
Frobenius functional $f\in \mathfrak{p}(i, n)^*$ is $n - 1$.

In proving the Gerstenhaber-Giaquinto boundary conjecture for the $(5, 12)$
case it seems that we must look beyond the small functionals.  To be more
precise, first recall that for any Frobenius Lie algebra $\mathfrak{f}$ and
Frobenius functional $f\in\mathfrak{f}^*$ we can produce a solution to the
classical Yang-Baxter equation $r_f\in {\mathcal C}$ having carrier
$\mathfrak{f}$ by inverting the matrix $(B_{jk})$, where $B_{jk} =
f([x_j,x_k])$ with respect to a basis $x_1,\dots,x_d$ of $\mathfrak{f}$. We
simply put $r_f = \sum (B_{jk})^{-1}x_j\wedge x_k$.  In this paper, part of the
main result illustrates that when $f \in \mathfrak{p}(i, n)^*$ is the subprime
functional and $n\equiv \pm 1$ (mod $i$), there exists  $x\in\mathfrak{sl}_n$
so that $x.r_{CG}(i, n) = r_f$, namely $x = -X +\epsilon Z$. In view of this,
one approach to proving the boundary conjecture in the $(5, 12)$-case would be
to find $x\in\mathfrak{sl}_{12}$ and a Frobenius functional
$f\in\mathfrak{p}(5, 12)^*$ so that $x.r_{CG}(5, 12) = r_f$.

With this in mind, consider the $12\times 12$ matrix $x = (a_{jk})$ where the
entry in the $(j,k)$-position is a variable named $a_{jk}$. Thus, with a fixed
functional $f$, the equation $x.r_{CG}(5, 12) = r_f$ represents a linear system
of $12^4$ equations in $144$ variables.  However computer calculations indicate
that this system is inconsistent for each small Frobenius functional $f\in
\mathfrak{p}(5, 12)^*$.

Rather than searching first for a plausible functional $f$ that solves the
system $x.r_{CG}(5, 12) = r_f$, we could instead first find $x \in
\mathfrak{sl}_{12}$ so that $x.r_{CG}(5, 12)\in {\mathcal C}$.  However this
amounts to solving a system of $144^3$ quadratic equations in $144$ variables,
which is a difficult task.  Instead we only look for $x$ of the form
$\sum_{jk}a_{jk}e_{jk}$ with the scalars $a_{jk} \in \mathbb{F}$ nonzero only
when $j - k$ belongs to a sufficiently ``small'' subset of integers. This
restriction significantly reduces the number of equations and variables of the
quadratic system. This approach likely leads to several plausible candidates
for $x$, but we only want to consider those with $x.r_{CG}(5, 12)$ having
carrier equal to $\mathfrak{p}(5, 12)$. For example, define

\begin{equation}
    x := X_7 + X_4 + X_1 + X_{-2} \in \mathfrak{p}(5, 12),
\end{equation}

\noindent where $X_7$, $X_4$, $X_1$, $X_{-2}\in\mathfrak{p}(5, 12)$ are defined
as

\begin{align}
    &X_7 := -\sum_{1\leq j \leq 5} e_{j, j+7},
    &
    &X_1 := -\left(e_{1,2} + e_{3, 4} + 2e_{5,6} + e_{6,7} + e_{8,9}
    +e_{10,11}\right),
    \\
    &X_4 := -e_{2,6} - e_{3,7}\in,
    &
    &X_{-2} := \sum_{1\leq j\leq 5}\floor*{\frac{7-j}{2}}\left(e_{j,j-2} +
    e_{j+7,j+5}\right),
\end{align}

\noindent and let $r = r_{CG}(5, 12)$ be the Cremmer-Gervais $r$-matrix of type
$(5, 12)$. Computer calculations verify that $x.r$ is a solution to the CYBE
having carrier $\mathfrak{p}(5, 12)$ and admits the Frobenius functional

\begin{equation}
    f = \left(\sum_{1\leq j\leq 5}e_{j,j+7}^*\right) + \left(\sum_{1\leq j\leq
    5}e_{j+7, j+5}^*\right) + e_{6,7}^* + e_{7,8}^* + e_{6,10}^* + e_{7,11}^*
    \in\mathfrak{p}(5, 12)^*.
\end{equation}

\noindent Observe that $f$ is not a small Frobenius functional because its
support has size greater than $11$. The principal element associated to $f$ is

\begin{equation}
    H = \frac{1}{2}\cdot \operatorname{diag} \left(1, -1, 3, 1, 5, -3, -5, -1,
    -3, 1, -1, 3\right) + 3\left(e_{3,6} + e_{4,7} + e_{1,7}\right)\in
    \mathfrak{p}(5, 12).
\end{equation}

Similar to our previous constructions, we now let $\mathfrak{n}$ denote the Lie
subalgebra of $\mathfrak{sl}_{12}$ generated by the matrices $X_7$, $X_4$,
$X_1$, and $X_{-2}$.  The $\mathfrak{n}$-module $M:={\mathcal
U}(\mathfrak{n}).r$ decomposes into $H$-eigenspaces $M = M_0\oplus M_1\oplus
\cdots \oplus M_9$, where $M_\lambda := \{v\in M \mid H.v = \lambda v \}$,
having respective dimensions $1, 4, 5, 9, 10, 12, 10, 8, 4, 2$. Interestingly,
the subspace $M_0$ is spanned by

\begin{equation}
    r^\prime := exp\left(-e_{3, 6} - e_{4, 7} - e_{1, 7}\right).\left(r -
    \alpha_0\right),
\end{equation}

\noindent where $\alpha_0$ is the $\alpha$-part of $r$ obtained by adding the
edges $5 \to 10$ and $6 \to 11$ to the Cremmer-Gervais graph $\Gamma_{CG}(5,
12)$. We note that $r^\prime$ is a solution to the MCYBE equivalent to
$r-\alpha_0$.

We also have $x.r\in M_1$. Thus $t^{2H}(x.r) = t^2(x.r)$. We remark that
$t^{2H}$ has polynomial entries in $t^{\pm 1}$. The matrix $t^{2H}$ can be
obtained by diagonalization: $2H = PDP^{-1}$ with $D$ a diagonal matrix having
integer entries $D = \operatorname{diag}(H_1, H_2,\dots, H_{12})$. Next put
$t^{2H} = P \operatorname{diag} (t^{2H_1}, \dots, t^{2H_{12}}) P^{-1}$.  Define
$Y := -2e_{1,6} - e_{2,6} + 3e_{3,6} + 2e_{4,6} + e_{5,6} - e_{3, 7} - e_{4, 7}
- e_{5, 7} \in \mathfrak{n}$ and put

\begin{equation}
    g := e^{X_7}e^{X_{-2}}e^{[X_{-2}, X_1]}e^{X_1}e^Y \in SL_{12}.
\end{equation}

As before, the matrix $g$ is obtained by seeking a matrix of the form $g=e^{\bf
X}$ with ${\bf X}\in\mathfrak{n}$ so that $e^{\bf X}.r\in M_0\oplus M_1$ and
observing that $g$ factors into the form written above.  We have $g.r =
r^\prime + x.r$. Therefore $t^{2H}g.r = r^\prime + t^2x.r$. Since
$\langle t^{2H}g.r, t^{2H}g.r\rangle = \langle r^\prime, r^\prime\rangle$,
Theorem \ref{constructing boundary solutions, 1} implies $x.r$ is a boundary
solution to the CYBE lying in the closure of the Belavin-Drinfeld component of
${\mathcal M}$ containing $r$.

\end{document}